\def\IN{\mathds{N}}
\def\IR{\mathds{R}}
\def\Inv{\mathrm{Inv}}
\def\eps{\varepsilon}
\def\iso{\simeq}
\newcommand\abs[1]{\mathord{\left\lvert#1\right\rvert}}
\newcommand\norm[1]{\mathord{\left\Vert#1\right\Vert}}
\newcommand{\rel}{\sim}
\newcommand{\de}{\;\mathrm{d}}
\newcommand{\undefined}{\Diamond}
\DeclareMathOperator{\homI}{h}
\DeclareMathOperator{\interior}{int}
\DeclareMathOperator{\cl}{cl}
\DeclareMathOperator{\Def}{\mathcal{D}}
\DeclareMathOperator{\Con}{\mathcal{C}}
\DeclareMathOperator{\Hom}{\mathrm{H}}
\DeclareMathOperator{\dirlim}{\mathrm{dirlim}}
\newcommand{\Hull}{\mathcal{H}}
\newcommand{\rep}[1]{\left[#1\right]}
\newtheorem{theorem}{Theorem}[section]
\newtheorem{lemma}[theorem]{Lemma}
\theoremstyle{definition}
\newtheorem{definition}[theorem]{Definition}
\theoremstyle{remark}
\newtheorem{remark}[theorem]{Remark}
\newtheorem{example}[theorem]{Example}
\begin{document}
	\title[Nonautonomous Conley Index Theory: The Connecting Homomorphism]{
		Nonautonomous Conley Index Theory\\
		The Connecting Homomorphism}
	
	\author[A. J\"anig]{Axel J\"anig}
	
	\address
	{\textsc{Axel J\"anig}\\
		Institut f\"ur Mathematik\\
		Universit\"at Rostock\\
		18051 Rostock, Germany}
	
	\email{axel.jaenig@uni-rostock.de}

	
	
	\subjclass[2010] {Primary: 37B30, 37B55; Secondary: 34C99, 35B08, 35B40}
	\keywords{nonautonomous differential equations, attractor-repeller decompositions, connecting orbits, connecting homomorphism, perturbations, semilinear parabolic equations, Morse-Conley index
	theory, partially orderd Morse-decompositions, nonautonomous Conley index, homology Conley index}
%
%
	
	\begin{abstract}%
		Attractor-repeller decompositions of isolated invariant sets give rise
		to so-called connecting homomorphisms. These homomorphisms reveal information
		on the existence and structure of connecting trajectories of the underlying
		dynamical system.
		
		To give a meaningful generalization of this general principle to nonautonomous
		problems, the nonautonomous homology Conley index is expressed as a direct limit.
		Moreover, it is shown that a nontrivial connecting homomorphism implies, on the dynamical systems level,
		a sort of uniform connectedness of the attractor-repeller decomposition.
	\end{abstract}
	
	\maketitle
	 
	In previous works \cite{article_naci, article_naci_1} the author developed a nonautonomous Conley index theory.
	The index relies on the interplay between a skew-product semiflow and a nonautonomous
	evolution operator. It can be applied to various nonautonomous problems, including ordinary differential
	equations and semilinear parabolic equations (see \cite{article_naci}).
	
	Every attractor-repeller decomposition
	of an isolated invariant set gives rise to a long exact sequence involving the homology Conley index.
	The connecting homomorphism of this sequence contains information on the connections between repeller
	and attractor. In particular, the connecting homomorphism vanishes if a connecting orbit does not exist.
	
	However, the mere existence of a connecting orbit is a very weak result in
	the context of nonautonomous dynamical systems. This is due to the fact that, 
	in general, a nonautonomous Conley index is always tied to
	a family of nonautonomous dynamical systems. Therefore, for a specific dynamical system, only (in an appropriate sense) uniform properties are meaningful. It will be shown that the existence of connections due to a
	non-trivial connecting homomorphism is such a uniform property.
	
	More precisely, the nonautonomous homology
	Conley index is expressed as a direct limit, which resembles the definition of a discrete Conley index.
	Using the direct limit formula, a notion of uniform connectedness of the attractor-repeller decomposition
	respectively the invariant set is introduced. Moreover, it is shown that a non trivial connecting homomorphism
	implies not only the existence of a connection but a uniform connection of repeller and attractor.
	
	Theorem \ref{th:171017-1012} below, included mainly for illustrative purposes,
	translates the results of this paper to the problem of small perturbations
	of a class of semilinear parabolic equations e.g., reaction diffusion equations.
	The main part of the paper starts with a Preliminaries section, where we collect
	important definitions and results from other works.
	In Section 3, the direct limit formulation is formulated and proved. The notion
	of uniform connectedness is introduced in Section 4. 
	Subsequently, we prove Theorem \ref{th:141205-1538} stating that
	a nontrivial connecting homomorphism implies a uniform connection of
	repeller and attractor.

\begin{section}{Nonautonomous $C^0$-small perturbations of (autonomous) semilinear parabolic equations}
	\label{sec:160219-1750}
	\index{perturbations of semilinear parabolic equations}
	Let $X$ be a Banach space and $A_0$ be a sectorial operator defined on a dense
	subset $\Def(A_0)\subset X$. We are interested in mild solutions of
	\begin{equation}
	\label{eq:150430-1526}
	u_t + A_0u = \hat f(t,u),
	\end{equation}
	which happen to be strong solutions due to regularity assumptions.
	Let us further assume that $A_0$ has compact resolvent.
	
	As often, the operator $A_0$ is assumed to be positive, so
	there is a family of fractional power spaces $X^\alpha$ defined by $A_0$.
	The respective norm is given by $\norm{x}_\alpha := \norm{A^\alpha_0 x}_X$.
	A typical example would be the Laplace-operator on a bounded domain with
	smooth boundary under appropriate boundary conditions (see e.g. \cite{henry, sellyou, pazy}).
	
	We will shortly introduce another metric space $Y$. With every $f\in Y$ there
	is an associated mapping $\hat f$, which serves a a parameter
	for the evolution operator defined by \eqref{eq:150430-1526}. A typical
	example for $\hat f$ is assigning the Nemitskii operator associated
	with a function $f$.
	
	We are interested $C^0$-small nonautonomous perturbations of autonomous
	equations that is,
	\begin{equation*}
		u_t + A_0u = \hat f(u) + \hat g(t,u)
	\end{equation*}
	where $g$ is assumed to be small in an appropriate metric. Note that
	this is much stronger than 
	\begin{equation*}
	u_t + A_0u = \hat f(u) + \eps \hat g(t,u),
	\end{equation*}
	where $\eps$ is assumed to be small.
	
	The main result is the persistence of Morse-decompositions
	and certain solutions: Morse-sets with a non-zero index as well
	as connecting orbits with a non-vanishing connecting homomorphism.
	A typical Morse-set with non-zero index might be a hyperbolic equilibrium
	and a typical Morse-set with a non-vanishing connecting homomorphism
	might be a transversal heteroclinic solution (see \cite{article_index}).
	
	In the sequel, a specific choice for $Y$ is made. Additional material can be found in 
	\cite{article_naci} and \cite{sell1971topological}.
	
	Let $\Omega\subset \IR^N$ be a smooth bounded domain, and suppose that,
	for some $\alpha\in\left[0,1\right[$, there is a continuous inclusion $X^\alpha\subset C(\bar\Omega)$.
	Let $Y$ denote the set of all continuous functions $f:\;\IR\times \bar\Omega\times\IR\to\IR$
	which are subject of the following restriction:
	\begin{enumerate}
		\item[] For some $\delta>0$ and every $C_1>0$, there are constants $C_2=C_2(C_1)$ and $C_3 = C_3(C_1)$ such that
		for all $(t,x,u)\in \IR\times\Omega\times \IR$ with $\abs{u}\leq C_1$
		\begin{equation*}
		\abs{f(t,x,u)} \leq C_2
		\end{equation*}
		and for all $(t_1,x_1,u_1), (t_2,x_2,u_2)\in\IR\times\Omega\times\IR$ with $\abs{u_1},\abs{u_1}\leq C_1$
		\begin{equation*}
		\abs{f(t_1,x_1,u_1) - f(t_2,x_2,u_2)} \leq C_3\left( \abs{t_1-t_2}^\delta + \abs{x_1-x_2}^\delta +  \abs{u_1 - u_2}\right)
		\end{equation*}
	\end{enumerate}
	Defining addition and scalar multiplication pointwise as usual, $Y$ 
	becomes a linear space. We consider a family $(\delta_n)_{n\in\IN}$
	of seminorms 
	\begin{equation*}
	\delta_n(f) := \sup\{ \abs{f(t,x,u)}:\; (t,x,u)\in\IR\times\Omega\times\IR\text{ with }\abs{t},\abs{u}\leq n\}.
	\end{equation*}
	These seminorms give rise to an invariant metric $d$ on $Y$:
	\begin{equation*}
	d(f_1, f_2) := \sum^{\infty}_{n=1} 2^{-n} \frac{\delta_n(f_1-f_2)}{1+\delta_n(f_1-f_2)}.
	\end{equation*}
	The metric $d$ induces the compact-open topology on $Y$, so a sequence of functions
	converges with respect to $d$ if and only if it converges uniformly on bounded subsets
	of $\IR\times\Omega\times\IR$.
	
	To formulate the theorem below, a uniform distance is also required i.e.,
	\begin{equation*}
	d_\text{unif}(f_1,f_2) := \sup_{(t,x,u)\in\IR\times\Omega\times\IR} \abs{f_1(t,x,u) - f_2(t,x,u)}
	\end{equation*}
	Denote
	\begin{equation*}
	h:\;\IR\to \IR\quad h(t) := 
	\begin{cases}
	(t+1)\sin\ln (t+1) & t> 0\\
	0 & t\leq 0
	\end{cases}
	\end{equation*}
	and $t_n := e^{2\pi n} - 1$, $n\in\IN$. Then, $\ln(t_n +s) - 2\pi n\to 0$ as $n\to\infty$
	uniformly for $s$ lying in bounded subsets of $\IR$. Hence, one has 
	\begin{equation*}
	h(t_n + s) = h(t_n) + \int\limits^{t_n+s}_{t_n} \sin\ln (t+1) + \cos\ln (t+1) \de t
	\end{equation*}
	so that $h(t_n + s) \to s$ as $n\to\infty$ uniformly on bounded
	sets.
	
	We are interested in full (defined on $\IR$) solutions of
	a perturbed equation. Suppose that $f\in Y$ is the
	parameter associated with the perturbed equation. 
	Computing the index with respect to $f$ would imply the
	loss of all the information contained in $f$ for negative times.
	The index would be determined by the equation's behaviour
	at large times. This restriction can be overcome by
	using the auxiliary function $h$ defined above. It allows
	to embed $f$ into the $\omega$-limes set of a related parameter, namely:
	\begin{equation*}
	f.h := ((t,x,u)\mapsto f(h(t),x,u)).
	\end{equation*}
	
	It is easy to see that $f\in Y$ implies $f.h\in Y$ and, from
	the calculations above, it follows that $(f.h)^{t_n}\to f$ in $Y$,
	that is, uniformly on bounded subsets. 
	
	Combining this approach with the abstract results of this paper
	and previous works on the subject, one obtains the following theorem.
	
	\begin{theorem}
		\label{th:171017-1012}
		Suppose that $f\in Y$ is autonomous, and let
		$K\subset X^\alpha$ be a compact invariant
		set with respect to the evolution operator (semiflow) on $X^\alpha$
		defined by \eqref{eq:150430-1526}. Let $N\subset X^\alpha$
		be a strongly admissible (e.g. bounded) isolated neighbourhood of $K$.
		
		Let $(A,R)$ be an attractor-repeller decomposition
		of $K$, and assume that the associated connecting homomorphism
		$\partial:\; \Hom_*\Con(f,A)\to \Hom_*\Con(f,R)$ defined
		by the homology attractor-repeller sequence does not vanish.
		
		Let $N_A\subset X^\alpha$ (resp. $N_R$) be an isolating neighbourhood
		for $A$ (resp. $R$), and suppose that $N_A\cap N_R = \emptyset$.
		
		Then, there exists an $\eps>0$ such that the following
		holds true for all $f'\in Y$ with $d_\text{unif}(f,f')<\eps$:
		\begin{enumerate}
			\item[(a)] If $u:\;\IR\to N\subset X^\alpha$ is a solution
			of 
			\begin{equation}
			\label{eq:150430-1652}
			u_t + A_0 u = \hat{f'}(t,u),
			\end{equation}
			then either $u(\IR)\subset N_A\cup N_R$ or
			$\alpha(u)\subset N_R$ and $\omega(u)\subset N_A$.
			\item[(b)] There is a solution $u:\;\IR\to N_R$ of \eqref{eq:150430-1652}.
			\item[(c)] There is a solution $u:\;\IR\to N_A$ of \eqref{eq:150430-1652}.
			\item[(d)] Suppose $U_A\subset X^\alpha$ is a neighbourhood of $A$, $U_R\subset X^\alpha$ a neighbourhood
			of $R$ and $U_A\cap U_A=\emptyset$. Then there is a solution $u:\;\IR\to N$ of \eqref{eq:150430-1652}
			such that $\alpha(u)\subset N_R$, $\omega(u)\subset N_A$ and $u(0)\in X^\alpha\setminus (U_A\cup U_R)$.
		\end{enumerate}
	\end{theorem}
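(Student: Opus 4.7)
The plan is to deduce the theorem from the abstract uniform-connection result, Theorem~\ref{th:141205-1538}, together with the continuation theory for the nonautonomous Conley index from \cite{article_naci, article_naci_1}. The auxiliary function $h$ is the device that bridges the uniform metric $d_\text{unif}$, in which $f'$ is close to $f$, and the compact-open topology used by the nonautonomous index.

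First I would embed the autonomous parameter $f$ into the nonautonomous framework by replacing it with $f.h\in Y$. Since $(f.h)^{t_n}\to f$ in $Y$ by the computation preceding the theorem, $f$ lies in the hull of $f.h$, so the autonomous attractor-repeller decomposition $(A,R)$ of $K$ together with its nonvanishing connecting homomorphism yields a nonautonomous attractor-repeller decomposition associated to $f.h$ on the isolating neighbourhoods $N$, $N_A$, $N_R$. Strong admissibility for the semilinear parabolic setting is provided by \cite{article_naci}, so the abstract machinery of the paper applies to this data.

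Next I would transfer the nonvanishing of $\partial$ from $f.h$ to $f'.h$ whenever $d_\text{unif}(f,f')$ is sufficiently small. Pullback by $h$ is an isometry for the uniform distance, so $d_\text{unif}(f.h, f'.h) < \eps$. Combining the direct limit formula of Section~3 with the continuation and perturbation stability established in \cite{article_naci_1}, one may choose $\eps>0$ so small that $N$, $N_A$, $N_R$ remain isolating neighbourhoods for the evolution operator associated to $f'.h$, the attractor-repeller decomposition continues, and the associated connecting homomorphism is isomorphic to the original one; in particular it remains nontrivial. Finally I would apply Theorem~\ref{th:141205-1538} to $f'.h$ with its continued decomposition. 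Assertion~(d), the existence of a full solution whose value at $0$ avoids $U_A\cup U_R$, is precisely the uniform connectedness conclusion. Assertions~(b) and~(c) follow because a nonzero $\partial$ forces both $\Hom_*\Con(f,A)$ and $\Hom_*\Con(f,R)$ to be nontrivial, and a nontrivial Conley index guarantees a full solution inside the corresponding isolated invariant set. Assertion~(a) is the standard attractor-repeller dichotomy for the continued isolated invariant set in $N$.

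The main obstacle is the perturbation step: one must argue that $d_\text{unif}$-closeness of $f$ and $f'$ produces a perturbation small enough, in the compact-open topology that governs the hull and hence the nonautonomous index, to preserve both the isolating neighbourhoods and the connecting homomorphism for $f'.h$. This is exactly what the direct limit formulation of Section~3 is designed to handle: it reduces preservation of the index and its morphisms under perturbation to finite-time behaviour of the evolution operator, and finite-time behaviour on bounded sets of $X^\alpha$ is controlled by $d_\text{unif}$ via the variation-of-constants formula and standard parabolic estimates.
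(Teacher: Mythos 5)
Your proposal follows essentially the same route as the paper: twist by $h$ so that $f'$ lies in $\omega(f'.h)$, use the continuation theory (the paper cites Theorem~3.4 of \cite{article_naci_2}) to transfer the commutative attractor-repeller ladder and the nonvanishing of $\partial$ from $f$ to $f'.h$, deduce (b) and (c) from nontriviality of the indices of $A'$ and $R'$, (a) from the continued attractor-repeller dichotomy, and (d) from Theorem~\ref{th:141205-1538}. One small point the paper makes explicit that you skip past: because $f$ is autonomous, $f.h = f$ identically, so the twist is a genuine change only on the perturbation side $f'\mapsto f'.h$; and the reason $d_{\text{unif}}$ (rather than $d$) is the right metric is that $d_{\text{unif}}(f.h,f'.h)\leq d_{\text{unif}}(f,f')$ uniformly over the whole translation orbit, which controls the hulls and hence the relevant $\omega$-limit sets -- closeness of $f$ and $f'$ in $d$ alone would not give this.
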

	
	Using the same arguments as below, the
	theorem can be generalized to partially ordered Morse decompositions (see \cite{article_naci_2}).
	Moreover, even if the connecting homomorphism is tribal, Morse-decompositions are still preserved under small perturbations, but depending on the Conley indices of attractor and repeller, the existence of solutions
	can no longer be proved\footnote{At least not with these arguments.}.
	
	Note that for a generic reaction diffusion equation, all equilibria are hyperbolic, and their respective
	stable and unstable manifolds intersect transversally \cite{brunpol}. For such a generic reaction-diffusion 
	equation each heteroclinic connection between equilibria of adjacent Morse indices corresponds to a nontrivial connecting homomorphism \cite{article_index}.
	
	\begin{proof}
		First of all, note that $f.h = f$ as $f$ is autonomous. Furthermore, $Y\times N$,
		$Y\times N_A$ and $Y\times N_R$ are isolating neighbourhoods for $(f.h,K)$, $(f.h, A)$
		and $(f.h,R)$ respectively. 
		
		Therefore, (a) follows from Theorem 2.2 in \cite{article_naci_2}. We will now consider
		(the relevant part of) an attractor-repeller sequence containing the non-vanishing 
		connecting homomorphism $\partial$:
		\begin{equation*}
		\xymatrix@1{
			\ar[r] & \Hom_*\Con(f,R) \ar[r]^-{\partial} & \Hom_{*-1}\Con(f,A) \ar[r] &
		}
		\end{equation*}
		Since $\partial\neq 0$, one necessarily has $\Hom_*\Con(f,R)\neq 0$ and
		$\Hom_*\Con(f,A)\neq 0$. By using Theorem 3.4 in \cite{article_naci_2}, one proves that
		for all $f'$ in a neighbourhood of $f$ given by (a), the attractor-repeller sequence
		above extends to a commutative ladder:
		\begin{equation*}
		\xymatrix{
			\ar[r] & \Hom_*\Con(f,R) \ar[r]^-{\partial} \ar[d]^-\iso& \Hom_{*-1}\Con(f,A) \ar[r] \ar[d]^-\iso& \\
			\ar[r] & \Hom_*\Con(f'.h,R') \ar[r]^-{\partial'} & \Hom_{*-1}\Con(f'.h,A') \ar[r] & 
		}
		\end{equation*}
		Here, we set $R' := \Inv(\omega(f'.h)\times N_R)$ and $A' := \Inv (\omega(f'.h)\times N_A)$.
		
		Consequently, in view of Corollary 4.11 in \cite{article_naci} and because $f'\in \omega(f'.h)$,
		(b) and (c) must hold. Finally, claim (d) is a consequence of Theorem \ref{th:141205-1538},
		stating that $K' = \Inv (\omega(f'.h)\times N)$ is uniformly connected.
	\end{proof}
\end{section}

  	\begin{section}{Preliminaries}
  		The section starts with a collection of useful definitions and
  		terminology, mainly from \cite{article_naci} and \cite{article_naci_1}.
  		Thereafter, we review the concept of index pairs and index triples, followed
  		by the nonautonomous homology Conley index and the connecting homomorphism.
	 	
	 	\begin{subsection}{Quotient spaces}
	 		\index{quotient space}
	 		\begin{definition}
	 			\label{df:140606-1652}
	 			Let $X$ be a topological space, and $A,B\subset X$. Denote
	 			\begin{equation*}
	 			A/B := A/R \cup \{A\cap B\},
	 			\end{equation*}
	 			where $A/R$ is the set of equivalence classes
	 			with respect to the relation $R$ on $A$ which is defined by $x R y$ iff $x=y$ or $x,y\in B$.
	 			
	 			We consider $A/B$ as a topological space
	 			endowed with the quotient topology with respect to the
	 			canonical projection $q:\; A\to A/B$, that is, a
	 			set $U\subset A/B$ is open if and only if
	 			\begin{equation*}
	 			q^{-1}(U) = \bigcup_{x\in U} x
	 			\end{equation*}
	 			is open in $A$.
	 		\end{definition}
	 		
	 		Recall that the quotient topology is the final topology
	 		with respect to the projection $q$. 
	 		
	 		\begin{remark}
	 			The above definition is compatible
	 			with the definition used in \cite{hib}
	 			or \cite{ryb}. The only difference
	 			occurs in the case $A\cap B=\emptyset$, where
	 			we add $\emptyset$, which is never an equivalence class,
	 			instead of an arbitrary point.
	 		\end{remark}
	 	\end{subsection}
	 	
	 	\begin{subsection}{Evolution operators and semiflows}
	 		Let $X$ be a metric space. Assuming that $\undefined\not\in X$, we
	 		introduce a symbol $\undefined$, which means "undefined". The intention is
	 		to avoid the distinction if an evolution operator is defined for a given argument
	 		or not. Define $\overline A := A\dot\cup\{\undefined\}$ whenever $A$ is a set with $\undefined\not\in A$.
	 		Note that $\overline{A}$ is merely a set, the notation does not contain any
	 		implicit assumption on the topology. 
	 		
	 		\index{evolution operator}
	 		\index{semiflow}
	 		
	 		\begin{definition}
	 			\label{df:131009-1504}
	 			\index{evolution operator}
	 			Let $\Delta:=\{(t,t_0)\in\IR^+\times\IR^+:\; t\geq t_0\}$.
	 			A mapping $\Phi:\; \Delta\times \overline{X}\to \overline{X}$
	 			is called an {\em evolution operator} if
	 			\begin{enumerate}
	 				\item $\Def(\Phi) := \{(t,t_0,x)\in\Delta\times X:\; \Phi(t,t_0,x)\neq\undefined\}$ is open
	 				in $\IR^+\times\IR^+\times X$;
	 				\item $\Phi$ is continuous on $\Def(\Phi)$;
	 				\item $\Phi(t_0,t_0, x) = x$ for all $(t_0,x)\in\IR^+\times X$;
	 				\item $\Phi(t_2 ,t_0,x) = \Phi(t_2,t_1,\Phi(t_1,t_0,x))$ for all $t_0\leq t_1\leq t_2$ in $\IR^+$ and $x\in X$;
	 				\item $\Phi(t,t_0,\undefined) = \undefined$ for all $t\geq t_0$ in $\IR^+$.
	 			\end{enumerate}
	 			
	 			A mapping $\pi:\; \IR^+\times \overline{X}\to \overline{X}$ is called {\em semiflow}
	 			if $\tilde\Phi(t+t_0,t_0,x) := \pi(t,x)$ defines an evolution operator.
	 			To every evolution operator $\Phi$, there is an associated
	 			(skew-product) semiflow $\pi$ on an extended phase space $\IR^+\times X$, defined by $(t_0,x)\pi t = (t_0+t, \Phi(t+t_0,t_0,x))$.
	 			
	 			A function $u:\; I\to X$ defined on a subinterval $I$ of $\IR$ is called a {\em solution of (with respect to) $\Phi$}
	 			if $u(t_1) = \Phi(t_1,t_0,u(t_0))$ for all $\left[t_0,t_1\right]\subset I$.
	 		\end{definition}
	 		
	 		
	 		\begin{definition}
	 			Let $X$ be a metric space, $N\subset X$ and $\pi$ a semiflow on $X$. The set
	 			\begin{equation*}
	 			\Inv^-_\pi(N) := \{x\in N:\;\text{ there is a solution }u:\;\IR^-\to N\text{ with }u(0) = x\}
	 			\end{equation*}
	 			is called the {\em largest negatively invariant subset of $N$}.
	 			
	 			The set
	 			\begin{equation*}
	 			\Inv^+_\pi(N) := \{x\in N:\;x\pi\IR^+\subset N\}
	 			\end{equation*}
	 			is called the {\em largest positively invariant subset of $N$}.
	 			
	 			The set
	 			\begin{equation*}
	 			\Inv_\pi(N) := \{x\in N:\;\text{ there is a solution }u:\;\IR\to N\text{ with }u(0) = x\}
	 			\end{equation*}
	 			is called the {\em largest invariant subset of $N$}.
	 		\end{definition}

	 		\index{skew-product semiflow}
	 		\index{cocycle mapping}
	 		
	 		Let $X$ and $Y$ be metric spaces, 
	 		and assume that $y\mapsto y^t$ is
	 		a global\footnote{defined for all $t\in\IR^+$} semiflow 
	 		on $Y$, to which we will refer as $t$-translation.
	 		
	 		\begin{example}
	 			Let $Z$ be a metric space, and let $Y := C(\IR^+, Z)$
	 			be a metric space such that a sequence of functions converges
	 			if and only if it converges uniformly on bounded sets.
	 			The translation can now be defined canonically by
	 			$y^t(s) := y(t+s)$ for $s,t\in\IR^+$.
	 		\end{example}
	 		
	 		A suitable abstraction of many non-autonomous problems is given
	 		by the concept of skew-product semiflows introduced below.
	 		
	 		\begin{definition}
	 			We say that $\pi = (.^t,\Phi)$ is a skew-product semiflow on $Y\times X$
	 			if $\Phi:\; \IR^+\times \overline{Y\times X}\to \overline{Y\times X}$ is a mapping such that 
	 			\begin{equation*}
	 			(t,y,x)\pi t := 
	 			\begin{cases}
	 			(y^t, \Phi(t,y,x)) & \Phi(t,y,x)\neq\undefined\\
	 			\undefined & \text{otherwise}
	 			\end{cases}
	 			\end{equation*}
	 			is a semiflow on $Y\times X$.
	 		\end{definition}

			A skew-product semiflow gives rise to evolution operators.
	 		
	 		\begin{definition}
	 			Let $\pi = (.^t, \Phi)$ be a skew-product semiflow and $y\in Y$. Define
	 			\begin{equation*}
	 			\Phi_y(t+t_0,t_0,x) := \Phi(t,y^{t_0},x).
	 			\end{equation*}
	 			It is easily proved that $\Phi_y$
	 			is an evolution operator in the sense of Definition \ref{df:131009-1504}.
	 		\end{definition}
	 		
	 		\begin{definition}
	 			\index{positive hull}
	 			\index{$\Hull^+(y_0)$}
	 			\index{$Y_c$}
	 			For $y\in Y$ let
	 			\begin{equation*}
	 			\Hull^+(y) := \cl_Y \{y^t:\;t\in\IR^+\} 
	 			\end{equation*}
	 			denote the positive hull of $y$. Let $Y_c$ denote the set of 
	 			all $y\in Y$ for which $\Hull^+(y)$ is compact.
	 		\end{definition}

			\begin{definition}
				\label{df:isolating-neighborhood-1}
				Let $y_0\in Y$ and $K\subset \Hull^+(y_0)\times X$ be an invariant set. 
				A closed set $N\subset Y\times X$ (resp. $N\subset \Hull^+(y_0)\times X$)
				is called an {\em isolating neighbourhood} for $(y_0, K)$ (in $Y\times X$) (resp. in $\Hull^+(y_0)\times X$)
				provided that:
				\begin{enumerate}
					\item $K\subset \Hull^+(y_0) \times X$
					\item $K\subset \interior_{Y\times X} N$ (resp. $K\subset \interior_{\Hull^+(y_0)\times X} N$)
					\item $K$ is the largest invariant subset of $N\cap (\Hull^+(y_0)\times X)$
				\end{enumerate}				 	 							
			\end{definition}				
%
	 		
	 		The following definition is a consequence of the slightly modified notion
	 		of a semiflow (Definition \ref{df:131009-1504}) but not a semantic change
	 		compared to \cite{hib}, for instance.
	 		
	 		\begin{definition}
	 			\index{does not explode}
	 			We say that $\pi$ explodes in $N\subset Y\times X$
	 			if $x\pi\left[0,t\right[\subset N$ and $x\pi t=\undefined$.
	 		\end{definition}
	 		
	 		Following \cite{ryb}, we formulate the following
	 		asymptotic compactness condition.
	 		
	 		\begin{definition}
	 			A set $M\subset Y\times X$ is called {\em strongly admissible} provided
	 			the following holds:
	 			
	 			Whenever $(y_n,x_n)$ is a sequence in $M$ and $(t_n)_n$ is a sequence
	 			in $\IR^+$ such that $(y_n,x_n)\pi\left[0,t_n\right]\subset M$, then
	 			the sequence $(y_n,x_n)\pi t_n$ has a convergent subsequence.
	 		\end{definition}
	 		
%
%
	 	\end{subsection}
 			
  		\begin{subsection}{Index pairs and index triples}
  			The notion of (basic) index pairs relies on \cite{article_naci}
  			and was introduced in \cite{article_naci_1}.
  			
		 	\begin{definition}
        		\label{df:basic_index_pair}
	  	 		A pair $(N_1, N_2)$ is called a {\em (basic) index pair} relative to a
	  	 		semiflow $\chi$ in $\IR^+\times X$ if
	  	 		\begin{enumerate}
	  	 			\item[(IP1)] $N_2\subset N_1\subset \IR^+\times X$, $N_1$ and $N_2$ are
	  	 			closed in $\IR^+\times X$
	  	 			\item[(IP2)] If $x\in N_1$ and $x\chi t\not\in N_1$ for some $t\in\IR^+$, 
	  	 			then $x\chi s\in N_2$ for some $s\in\left[0,t\right]$;
	  	 			\item[(IP3)] If $x\in N_2$ and $x\chi t\not\in N_2$ for some $t\in\IR^+$, then
	  	 			$x\chi s\in (\IR^+ \times X)\setminus N_1$ for some $s\in\left[0,t\right]$.
	  	 		\end{enumerate}
	  	 	\end{definition}
	  			 	
		 	\begin{definition}
		 		\label{df:140128-1503}
		 		Let $y_0\in Y$ and $(N_1, N_2)$ be a basic index pair in $\IR^+\times X$ relative to $\chi_{y_0}$. Define
		 		$r:=r_{y_0}:\; \IR^+\times X\to \Hull^+(y_0)\times X$ by $r_{y_0}(t,x) := (y^t_0,x)$.
	  			 		
		 		Let $K\subset \omega(y_0)\times X$ be an (isolated) invariant
		 		set. We say that $(N_1, N_2)$ is a (strongly admissible) index pair\footnote{Every index pair in the
	 			sense of Definition \ref{df:140128-1503} is assumed to be strongly admissible.} for $(y_0, K)$ if:
		 		\begin{enumerate}
		 			\item[(IP4)] there is a strongly admissible isolating neighborhood $N$ of $K$ in $\Hull^+(y_0)\times X$
			 			such that $N_1\setminus N_2 \subset r^{-1}(N)$;
		 			\item[(IP5)] there is a neighbourhood $W$ of $K$ in $\Hull^+(y_0)\times X$
 			 			such that $r^{-1}(W)\subset N_1\setminus N_2$.
		 		\end{enumerate}
		 	\end{definition}
		 	
 		 	\begin{definition}
 		 		Let $(N_1, N_2)$ be an index pair in $\IR^+\times X$ (relative to the semiflow
 		 		$\chi$ on $\IR^+\times X$). For $T\in\IR^+$, we set
 		 		\begin{equation*}
	   		 		N^{-T}_2 := N^{-T}_2(N_1) := \{(t,x)\in N_1:\; \exists s\leq T\; (t,x)\chi s\in N_2\}.
 		 		\end{equation*}
 		 	\end{definition}
		 		 	
			To define index triples, the notion of an attractor-repeller decomposition is 
			required. 
			
	 		First of all, $\alpha$ and $\omega$-limes sets
	 		can be defined as usual.
	 		\begin{align*}
	 		\alpha(u) &:= \bigcap_{t\in\IR^-} \cl_{\Hull^+(y_0)\times X} u(\left]-\infty,t\right])\\
	 		\omega(u) &:= \bigcap_{t\in\IR^+} \cl_{\Hull^+(y_0)\times X} u(\left[t,\infty\right[)
	 		\end{align*}
				 		
	 		Based on these definitions, the notion of an attractor-repeller decomposition
	 		can be made precise.
				 		
	 		\begin{definition}
	 			Let $y_0\in Y$ and $K\subset \Hull^+(y_0)\times X$ be an isolated
	 			invariant set. $(A,R)$ is an {\em attractor-repeller decomposition} of 
	 			$K$ if $A,R$ are disjoint isolated invariant subsets of $K$ and for every solution $u:\;\IR\to K$
				one of the following alternatives holds true.
				\begin{enumerate}
					\item $u(\IR)\subset A$
					\item $u(\IR)\subset R$
					\item $\alpha(u)\subset R$ and $\omega(u)\subset A$
				\end{enumerate}
				We also say that $(y_0, K, A, R)$ is an attractor-repeller decomposition.
	 		\end{definition}
		 	
			Finally, index triples (which correspond to attractor-repeller decompositions)
			are defined.
	  			 	
	 		\begin{definition}
	 			\label{df:index_triple}
	 			Let $y_0\in Y$ and $K\subset \Hull^+(y_0)\times X$ be an isolated
	 			invariant set admitting a strongly admissible isolating neighbourhood $N$.
  				Suppose that $(A,R)$ is an attractor-repeller decomposition of 
  	 			$K$.
  			 			
  	 			A triple $(N_1, N_2, N_3)$ is called an {\em index triple} for
  				$(y_0,K,A,R)$ provided that:
  	 			\begin{enumerate}
  	 				\item $N_3\subset N_2\subset N_1$
  	 				\item $(N_1, N_3)$ is an index pair for $(y_0,K)$
  	 				\item $(N_2, N_3)$ is an index pair for $(y_0,A)$
  	 			\end{enumerate}
  	 		\end{definition}
  		\end{subsection} 
  		
  		\begin{subsection}{Homology Conley index and attractor repeller decompositions}
  			A {\em connected simple system} is a small category such that given
  			a pair $(A,B)$ of objects, there is exactly one morphism $A\to B$.
  			
			Let $y_0\in Y_c$ and $K\subset \Hull^+(y_0)\times X$
			be an isolated invariant set for which there is a strongly admissible
			isolating neighbourhood.	The categorial Conley index $\Con(y_0, K)$ (as defined in \cite{article_naci_1}) is a
			subcategory of the homotopy category of pointed spaces and a connected simple system. 
			Its objects are the index pairs for $(y_0, K)$. Roughly speaking, one can think
			of an index pair with collapsed exit set as a representative of the index. All of the
			representatives are isomorphic in the homotopy category of pointed spaces.
			
			Let $(\Hom_*,\partial)$ denote a homology theory with compact supports \cite{spanier}.
			Recall that $\Hom_*$ is a covariant function from the category of topological
			pairs to the category of graded abelian groups (or modules). 
			
			Define the {\em homology Conley index} $\Hom_*\Con(y_0,K)$ to be the following
			connected simple system: $\Hom_*(N_1/N_2, \{N_2\})$ is an object
			whenever
			$(N_1/N_2, N_2)$ is an object of $\Con(y_0, K)$. The morphisms of $\Hom_*\Con(y_0, K)$
			are obtained analogously from the morphisms of $\Con(y_0, K)$. 
			Note that we also write $\Hom_*(A,a_0) := \Hom_*(A,\{a_0\})$ provided the meaning is clear.
			
			Let $(y_0, K, A, R)$ be an attractor-repeller decomposition. There is a long exact \cite{article_naci_1}
			sequence
			\begin{equation*}
	 			\xymatrix@1{
				\ar[r] & \Hom_*\Con(y_0,A) \ar[r] & \Hom_*\Con(y_0,K) \ar[r]
 				& \Hom_*\Con(y_0,R) \ar[r]^-{\partial} & \Hom_{*-1}\Con(y_0,A)
 				\ar[r]&
	 			}
			\end{equation*}
			where $\partial$ denotes the connecting homomorphism.
  		\end{subsection}
  		
 \end{section}

\begin{section}{The homology Conley index as a direct limit}
	Let $(N_1, N_2)$ be a basic index pair. Another basic index pair 
	is $(N_1(\left[t,\infty\right[),\linebreak[0] N_2(\left[t,\infty\right[))$, where $t>0$ is arbitrary
	and $N_i(\left[t,\infty\right[) = \{(s,x)\in N_i:\;s\geq t\}$ for $i\in\{1,2\}$. 
	The inclusion $(N_1(\left[t,\infty\right[), N_2(\left[t,\infty\right[))\subset(N_1, N_2)$
	induces a homotopy equivalence between the (pointed) quotient spaces
	$(N_1(\left[t,\infty\right[)/N_2(\left[t,\infty\right[),\linebreak[0] N_2(\left[t,\infty\right[))$
	and $(N_1/N_2, N_2)$.
	Apparently,	only the index pair at large times is relevant. 
	
	In the present section,
	this limit behaviour will be studied.
	Finite sections of an index pair
	$(N_1, N_2)$, that is, sets of the form $N_i\left[\alpha,\beta\right]
	= N_i \cap (\left[\alpha,\beta\right]\times X)$, in conjunction with appropriate morphisms
	form a direct system. The index $\Hom_*(N_1/N_2, N_2/N_2)$ is then proved
	to be isomorphic to a direct limit obtained from these sections. 
	
	It is interesting to note that 
	this result (in particular Lemma \ref{le:150114-1618})
	resembles constructing a Conley index for discrete time dynamical systems
	(see e.g. \cite{ci_discrete}). In this
	paper, however, we will focus on the use of the direct limit representation
	of the index as a tool. 
	
	For the rest of this section, let $\Lambda$ be a set and $\leq$ a partial order on $\Lambda$.
	Recall \cite{spanier} that a {\em direct system of sets} is
	a family $(A_\alpha)_{\alpha\in \Lambda}$ of sets and a family of functions
	$(f_{\alpha,\beta})$, where $\alpha,\beta\in\Lambda$, $\alpha\leq \beta$
	and $f_{\alpha,\beta}:\; A_\alpha\to A_\beta$.
	
	\index{direct limit}
	The {\em direct limit} $\dirlim (A_\alpha, f_{\alpha,\beta})$
	of $(A_\alpha, f_{\alpha,\beta})$ is the
	set of equivalence classes in $\bigcup_{\alpha\in\Lambda} \{\alpha\}\times A_\alpha$
	under the relation $\rel$, which is defined as follows:
	Let $\alpha,\beta\in\Lambda$ and 
	$(a,b)\in A^\alpha\times A^\beta$. $(\alpha,a)\rel (\beta,b)$ if and only if
	there is a $\gamma\in \Lambda$ such that $\alpha,\beta\leq\gamma$
	and $f_{\alpha,\gamma}(a) = f_{\beta,\gamma}(b)$.
	
	Let $(X,d)$ be a complete metric space, and $V\subset \IR^+\times X$.
	We set 
	\begin{equation*}
	\begin{split}
	V(t) &:= \{x:\; (t,x)\in V\}\\
	V(\left[a,b\right]) := V\left[a,b\right] &:= \{(t,x)\in V:\; t\in\left[a,b\right]\}.
	\end{split}
	\end{equation*}
	
	\begin{definition}
		\index{regular index pair}
		\index{exit time (inner)}
		An index pair $(N_1, N_2)$ is called {\em regular} (with respect to $y_0$) if
		the (inner) exit time $T_i:\; N_1\to \left[0,\infty\right]$, $T_i(x) := \sup\{t\in\IR^+:\; x\chi_{y_0}\left[0,t\right]\subset N_1\setminus N_2\}$
		is continuous.
	\end{definition}
	
	The main motivation for regular index pairs are Lemma \ref{le:140113-1642} below
	and Lemma \ref{le:150114-1618} at the end of this section. As
	stated subsequently in Lemma \ref{le:140107-1916},
	it is easily possible to obtain regular index pairs by modifying (enlarging) the exit set appropriately.
	The following notational shortcut is used frequently.
	\begin{equation*}
		\Hom_*[A,B] := \Hom_*(A/B, \{B\})
	\end{equation*}

	\begin{lemma}
		\label{le:140113-1642}
		Let $(N_1, N_2)$ be a regular index pair in $\IR^+\times X$.
		
		Consider
		the direct system $(A_\alpha, f_{\alpha,\beta})$ for $\alpha,\beta\in\Lambda$, where $\Lambda$ denotes the
		set of all non-empty compact subintervals of $\IR^+$
		ordered by inclusion, and $A_\alpha := H_*[N_1(\alpha),N_2(\alpha)]$. For
		$\alpha\subset \beta$, let $i_{\alpha,\beta}:\; (N_1(\alpha), N_2(\alpha))\to (N_1(\beta), N_2(\beta))$
		denote the respective inclusion and set $f_{\alpha,\beta} := \Hom_*(i_{\alpha,\beta}):\; A_\alpha\to A_\beta$.
		
		Then, the inclusions $i_\alpha:\; (N_1(\alpha), N_2(\alpha))\to (N_1, N_2)$ induce
		an isomorphism
		\begin{equation*}
		j: \dirlim (\Hom_*(N_1(\alpha), N_2(\alpha)), f_{\alpha,\beta}) \to \Hom_*[N_1, N_2],\quad \rep{(\alpha,x)} \mapsto \Hom_*(p\circ i_{\alpha})(x),
		\end{equation*}
		where $p:\; N_1\to N_1/N_2$ denotes the canonical projection.
	\end{lemma}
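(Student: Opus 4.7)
The plan is to establish the isomorphism by a standard direct-limit argument driven by the compact-support property of singular chains, after first reducing the quotient-space homology $\Hom_*[\,\cdot\,,\,\cdot\,]$ to ordinary relative pair homology. To begin, I would verify that $j$ is well-defined: this amounts to the observation $i_\beta \circ i_{\alpha,\beta} = i_\alpha$ as maps of pairs, so by functoriality of $\Hom_*$ the prescription $[(\alpha, x)] \mapsto \Hom_*(p \circ i_\alpha)(x)$ does not depend on the representative chosen from an equivalence class in the direct limit.

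Next, I would use regularity to pass to pair homology. The continuous inner exit time $T_i$ allows one to construct a strong deformation retraction of a neighborhood of $N_2$ in $N_1$ onto $N_2$, by flowing points along $\chi_{y_0}$ until they reach $N_2$; this exhibits $(N_1, N_2)$ as a good pair and yields $\Hom_*[N_1, N_2] \cong \Hom_*(N_1, N_2)$. The same construction applies to each finite section $(N_1(\alpha), N_2(\alpha))$, producing compatible identifications $A_\alpha \cong \Hom_*(N_1(\alpha), N_2(\alpha))$. After this reduction, the task becomes showing that the inclusions induce an isomorphism $\dirlim_\alpha \Hom_*(N_1(\alpha), N_2(\alpha)) \to \Hom_*(N_1, N_2)$.

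For surjectivity, represent a given class in $\Hom_*(N_1, N_2)$ by a singular relative cycle $c$; its support is compact in $N_1$, and since the time projection is continuous this support lies in some $\cli{a,b} \times X$. Setting $\alpha := \cli{a,b}$, the chain $c$ lives in $N_1(\alpha)$ with $\partial c$ in $N_2 \cap (\alpha \times X) = N_2(\alpha)$, so $c$ defines a class in $A_\alpha$ mapping to the original. For injectivity, suppose $x \in A_\alpha$ lies in the kernel; represent it by a cycle $c$ and fix chains $d$ in $N_1$ and $e$ in $N_2$ witnessing $c = \partial d + e$. Enlarging $\alpha$ to a compact interval $\beta$ containing the time projections of both supports makes $c$ null-homologous already in $(N_1(\beta), N_2(\beta))$, so $f_{\alpha,\beta}(x) = 0$ in $A_\beta$ and $[(\alpha,x)] = 0$ in the direct limit.

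I expect the principal obstacle to lie in the reduction step rather than in the chain-level arguments. One must check that the deformation retractions obtained from $T_i$ are consistent enough that the isomorphisms $\Hom_*[\,\cdot\,,\,\cdot\,] \cong \Hom_*(\,\cdot\,,\,\cdot\,)$ commute with the inclusions $i_{\alpha,\beta}$ (so that the direct system of quotient-pair homologies is naturally isomorphic to that of pair homologies), and that the restriction of $T_i$ to each finite section remains sufficiently well-behaved to produce a good pair there as well. Once this compatibility is secured, the two compactness arguments above immediately finish the proof.
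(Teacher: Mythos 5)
Your proposal and the paper's proof take the same route: use regularity to pass from the quotient-space group $\Hom_*[N_1,N_2]$ to the relative pair group $\Hom_*(N_1,N_2)$, then invoke the compact-supports property to identify the latter with the direct limit over compact time sections. The paper delegates the first step to Lemma \ref{le:141210-1901} (homotopy along the flow, $H(\lambda,x)=x\chi_{y_0}(\lambda\min\{T_i(x),1\})$, plus excision) and for the second simply cites \cite[Theorem 13 in Section 4.8]{spanier}, whereas you reprove compact supports at the chain level --- but the substance is identical.

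Your closing concern, however, is unfounded. You do not need to convert each finite section from quotient homology to pair homology, because the direct system already lives in pair homology: the transition maps $f_{\alpha,\beta}=\Hom_*(i_{\alpha,\beta})$ are induced by maps of pairs, and the formula $j(\rep{(\alpha,x)})=\Hom_*(p\circ i_\alpha)(x)$ likewise applies $\Hom_*$ to a map of pairs, so $A_\alpha$ is $\Hom_*(N_1(\alpha),N_2(\alpha))$; the expression $H_*[N_1(\alpha),N_2(\alpha)]$ in the statement is best read as a typographical slip for the relative group. Only the target $\Hom_*[N_1,N_2]$ needs the good-pair identification, and that is precisely what Lemma \ref{le:141210-1901} supplies. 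Your instinct that the flow-based retraction would not restrict to a finite section is in fact correct --- $\chi_{y_0}$ strictly advances the time coordinate out of $\alpha\times X$, which is why Lemmas \ref{le:140114-1611} and \ref{le:150306-1820} construct a different retraction onto the right-hand time slice when sections must be handled geometrically --- but no such retraction on $N_1(\alpha)$ is required here. Dropping that final paragraph, your argument coincides with the paper's.
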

	
	\begin{lemma}
		\label{le:140107-1916}
		Let $(N_1, N_2)$ be an index pair for $(y_0, K)$. Then there are a constant $\tau\in\IR^+$
		and a set $N'_2\subset N_1$ such that:
		\begin{enumerate}
			\item $N_2\subset N'_2\subset N^{-\tau}_2$
			\item $(N_1, N'_2)$ is a regular index pair for $(y_0, K)$.
		\end{enumerate}
	\end{lemma}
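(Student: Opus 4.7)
The plan is to set $N'_2 := N^{-\tau}_2$ for a sufficiently large $\tau > 0$; the inclusions $N_2 \subset N'_2 \subset N^{-\tau}_2$ in~(1) then hold by construction. It remains to verify that $(N_1, N'_2)$ is a regular index pair for $(y_0, K)$.

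First I would check closedness of $N'_2$ and the basic index pair axioms for $(N_1, N'_2)$. For closedness, a convergent sequence $(x_n) \subset N'_2$ comes with times $s_n \in [0,\tau]$ satisfying $x_n \chi_{y_0} s_n \in N_2$; strong admissibility on the compact window $[0,\tau]$ together with continuity of $\chi_{y_0}$ and closedness of $N_2$ forces the limit to lie in $N'_2$. Axiom (IP2) is immediate from $N_2 \subset N'_2$. For (IP3), given $x \in N'_2$ with $x\chi_{y_0} t \notin N'_2$, the last moment $s_1 \in [0, t+\tau]$ at which $x\chi_{y_0} s_1 \in N_2$ must satisfy $s_1 \leq t$ (else one would have $x\chi_{y_0} t \in N'_2$); applying (IP3) of the original pair $(N_1, N_2)$ at $x\chi_{y_0} s_1$ yields a time $s'' \leq t$ with $x\chi_{y_0} s'' \notin N_1$. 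Conditions (IP4), (IP5) for $(y_0, K)$ carry over, since $N_1 \setminus N'_2 \subset N_1 \setminus N_2$ and the neighbourhood $W$ of $K$ witnessing (IP5) can, if necessary, be shrunk to one whose $\tau$-forward image stays away from $N_2$, which is possible since $K$ is isolated away from the exit set.

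The main obstacle is regularity. A direct computation yields
\begin{equation*}
T_i(x) = \max\bigl(0,\, T_e(x) - \tau\bigr), \qquad T_e(x) := \inf\{t \geq 0 : x\chi_{y_0} t \in N_2\},
\end{equation*}
so continuity of the exit time for $(N_1, N'_2)$ reduces to continuity of $T_e$ viewed as a map $N_1 \to [0, \infty]$ with the one-point compactification topology on the range. Lower semi-continuity of $T_e$ is immediate from closedness of $N_2$ and continuous dependence on initial data (using (IP2) to keep relevant trajectories in $N_1$). Upper semi-continuity is the delicate point and is where the size of $\tau$ enters: the generic source of trouble is a trajectory that meets $\partial N_2$ only tangentially at $x\chi_{y_0} T_e(x)$, so that nearby trajectories miss $N_2$ entirely and $T_e$ jumps upward. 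The plan is to use strong admissibility, together with isolation of $K$, to show that any such tangency phenomenon has to occur within a bounded time horizon; choosing $\tau$ larger than this horizon absorbs all tangency points into the enlargement $N^{-\tau}_2$. Precisely, if $x_n \to x$ with $T_e(x_n) \to s^* > T_e(x)$, then by (IP2) the trajectories $x_n \chi_{y_0}$ stay in $N_1 \setminus N_2$ on an interval strictly longer than $T_e(x)$; extracting a convergent subsequence via strong admissibility and passing to the limit contradicts the minimality of $T_e(x)$ once $\tau$ has been chosen so that the limiting trajectory cannot be of arbitrary length. With $T_e$ continuous on a neighbourhood of $\{T_e \leq \tau\}$, the displayed formula yields continuity of $T_i$, establishing regularity.
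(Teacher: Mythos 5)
Your proposal takes $N'_2 := N^{-\tau}_2$ directly, and this is where the argument fails: for a general index pair there is no $\tau$ for which the inner exit time of $(N_1, N^{-\tau}_2)$ is continuous. The formula $T_i(x) = \max(0, T_e(x)-\tau)$ is correct (given (IP2)), but continuity of $T_i$ on $N_1 \setminus N'_2$ reduces to upper semi-continuity of $T_e$ on the sublevel set $\{T_e > \tau\}$, and the tangency phenomenon you correctly identify as "the delicate point" is not confined to a bounded time horizon. A trajectory can graze $\partial N_2$ tangentially at an arbitrarily late time $T_e(x) > \tau$, in which case nearby trajectories miss $N_2$ entirely and $T_e$ jumps to $\infty$; nothing about strong admissibility or isolation of $K$ rules this out at large times, and enlarging $\tau$ merely shifts which level set $\{T_e = \tau\}$ serves as $\partial N'_2$ without removing the discontinuities of $T_e$ above that level. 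Your limiting argument does not produce the claimed contradiction either: extracting a subsequence with $T_e(x_n) \to s^* > T_e(x)$ only shows $x\chi_{y_0}[0,s^*)$ lies in $\cl(N_1\setminus N_2)$, which is perfectly compatible with the trajectory of $x$ touching (but not entering the interior of) $N_2$ at the earlier time $T_e(x)$ --- this is exactly what a tangency looks like, not a contradiction.

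The paper sidesteps the tangency problem entirely by not using $N^{-\tau}_2$ as the new exit set. Instead it constructs a Urysohn function $f:N_1\to[0,1]$ with $f\equiv 0$ on $N_2$ and $f\equiv 1$ on $\cl(N_1 \setminus N^{-T}_2)$, and then defines the averaged quantity
\begin{equation*}
\lambda(x) := \int_0^{T(x)} f(x\chi_{y_0} s)\,\mathrm{d}s,
\end{equation*}
which satisfies $T(x) - T \leq \lambda(x) \leq T(x)$ and, crucially, \emph{is} continuous (the integral smooths out the jumps of $T_e$). Setting $N'_2 := \lambda^{-1}([0,T+1])$ gives a closed set sandwiched between $N_2$ and $N^{-\tau}_2$ (with $\tau = 2T+1$) that is positively invariant relative to $N_1$ because $\lambda$ decreases along the semiflow; on $N_1\setminus N'_2$ one has $f\equiv 1$, so $\lambda(x) = T_i(x) + T + 1$ and continuity of $\lambda$ hands you continuity of $T_i$ for free. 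This integral construction is the missing idea in your proof; without it, regularity simply does not hold for the naive choice $N'_2 = N^{-\tau}_2$.
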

	
	\begin{proof}
		By Lemma 4.8 in \cite{article_naci_1}, $N^{-T}_2$ is a neighbourhood
		of $N_2$ in $N_1$ provided that $T$ is sufficiently large.    
		It follows that $N_2\cap \cl (N_1\setminus N^{-T}_2) = \emptyset$.
		By Urysohn's lemma, there exists a continuous function $f:\;N_1\to \left[0,1\right]$
		such that $f(x) = 0$ on $N_2$ and $f(x) = 1$ on $\cl (N_1\setminus N^{-T}_2)$.
		
		Set
		\begin{equation*}
		\lambda(x) := \int\limits^{T(x)}_0 f(x\chi_{y_0} s)\de s,
		\end{equation*}
		where $T(x) := \sup \{ t\in\IR^+:\; x\chi_{y_0} \left[0,t\right]\subset N_1\setminus N_2\}$, in order to
		guarantee that the integrand is defined.
		
		It is easy to see that $\lambda(x) = 0$ on $N_2$ and $\lambda(x)\leq T(x)$ for all $x\in N_1$.
		Next, we are going to prove the left-hand inequality of
		\begin{equation}
		\label{eq:150921-1903}
		T(x) - T \leq \lambda(x) \leq T(x).
		\end{equation}
		One has $\lambda(x)\geq 0$ for all $x\in N_1$,
		so let $x\in N_1$ with $T(x)>T$. It follows that $f(x\chi_{y_0} s) = 1$ for all
		$s\in\left[0,T(x)-T\right]$, so 
		\begin{equation*}
		T(x) - T = \int\limits^{T(x)-T}_0 f(x\chi_{y_0} s)\,ds \leq \lambda(x),
		\end{equation*}
		proving \eqref{eq:150921-1903} for all $x\in N_1$.
		
		We need to show that $\lambda$ is continuous.
		Suppose that $x_n\to x_0$ is a sequence and $\lambda(x_0)<\infty$.
		Assume additionally that $T(x_n)$ is unbounded,
		so it is possible extract a subsequence $x'_n$ with $T(x'_n)\to\infty$. We have
		$x'_n\chi_{y_0} s\in N_1\setminus N^{-T}_2$ for all $s< T(x'_n)-T$ and all $n\in\IN$,
		so $x_0\chi_{y_0} s\in \cl (N_1\setminus N^{-T}_2) \subset N_1\setminus N_2$ for all $s\in\IR^+$, which in turn
		implies that $T(x_0) = \infty$. However by \eqref{eq:150921-1903}, $\lambda(x_0) \geq T(x_0) - T =\infty$,
		which is a contradiction. Consequently, the sequence $(T(x_n))_n$ must be bounded.
		
		We further have $x_n\chi_{y_0} \left[0,T(x_n)\right]\subset N_1$
		and $x_n\chi_{y_0} T(x_n)\in N_2$ for all $n\in\IN$. $T(x_n)$ is bounded, so
		we may choose a subsequence $x'_n$ with $T(x'_n)\to t_0<\infty$. It follows
		that 
		\begin{equation*}
		\lambda(x'_n)\to \int\limits^{t_0}_0 f(x\chi_{y_0} s)\,ds = \lambda(x_0),
		\end{equation*}
		where the last equality stems from the facts that $N_2$ is positively invariant
		and $f(x)=0$ on $N_2$.
		This readily implies that $\lambda(x_n)\to \lambda(x_0)$.
		
		Finally if $\lambda(x_0) = \infty$, then $x_0\chi_{y_0} s\in N_1\setminus N^{-T}_2$ for all
		$s\in\IR^+$. Arguing by contradiction, assume that there exist a real number $t_0$ and
		a subsequence $x'_n$
		with $\lambda(x'_n)\leq t_0$ for all $n\in\IN$. From \eqref{eq:150921-1903}, one obtains
		that $x'_n\chi_{y_0} t_n \in N^{-T}_2$
		for some $t_n\in\left[0,t_0\right]$. Taking subsequences, we may assume w.l.o.g.
		that $t_n\to t'_0\leq t_0$, so $x_0\chi_{y_0} t'_0\in N^{-T}_2$, implying that
		$\lambda(x_0)\leq T(x_0) \leq t'_0 + T$. This is a contradiction and completes
		the proof that $\lambda$ is continuous.
		
		It is easy to see 
		that $N'_2:=\lambda^{-1}(\left[0,T+1\right])$ is a closed
		neighborhood of $N^{-T}_2$ in $N_1$. Moreover, $\lambda$ is monotone decreasing along the semiflow, so $(N_1, N'_2)$
		is an index pair. 
		
		By \eqref{eq:150921-1903}, it holds that $N'_2\subset N^{-\tau}_2$, where $\tau := 2T + 1$. It follows
		from Lemma 2.7 in \cite{article_naci_1} that $(N_1, N^{-\tau}_2)$ is an
		index pair. In conjunction with Lemma 2.8 in \cite{article_naci}, one concludes that
		$(N_1, N'_2)$ is an index pair for $(y_0, K)$.
		
		Let $x\in N_1\setminus N'_2$ and recall the definition $T_i(x) := \sup\{t\in\IR^+:\; x\chi_{y_0} \left[0,t\right]\subset N_1\setminus N'_2\}$
		of the inner exit time. We have $\lambda(x\chi_{y_0} T_i(x)) = T+1$ and
		$f(x) = 1$ on $N_1\setminus N'_2$, so $\lambda(x) = T_i(x) + T + 1$.
		$\lambda$ is continuous as already proved, so $(N_1, N'_2)$
		is a regular index pair as claimed.
	\end{proof}
	
	Using regular index pairs, it is easy to prove the following
	stronger version of Corollary 4.9 in \cite{article_naci}.
	
	\begin{lemma}
		\label{le:detector2}
		Let $y_0\in Y$ and $K\subset \Hull^+(y_0)\times X$ an
		isolated invariant set admitting a strongly admissible isolating neighbourhood.
		
		If $(N_1, N_2)$ is an index pair for $(y_0, K)$, and $\homI(N_1/N_2, N_2)\neq \bar 0$,
		then there are a $t_0\in\IR^+$ and a solution $u:\;\left[t_0,\infty\right[\to N_1\setminus N_2$
		of $\Phi_{y_0}$.
	\end{lemma}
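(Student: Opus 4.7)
The strategy is to prove the contrapositive: I assume no such solution exists and construct a flow-driven null-homotopy of the pointed quotient $(N_1/N_2,[N_2])$, contradicting $\homI(N_1/N_2,N_2)\neq\bar{0}$. The deformation will push every point of $N_1$ forward along $\chi_{y_0}$ until it enters the exit set, so continuity of the inner exit time is essential. Since the given pair need not be regular, my first step is to invoke Lemma~\ref{le:140107-1916} to replace $N_2$ by a larger $N'_2$ with $N_2\subset N'_2\subset N^{-\tau}_2$ such that $(N_1,N'_2)$ is a regular index pair for $(y_0,K)$. Because any two index pairs for $(y_0,K)$ represent the same Conley index, we still have $\homI(N_1/N'_2,N'_2)\neq\bar{0}$; and since $N_2\subset N'_2$, a solution into $N_1\setminus N'_2$ is in particular a solution into $N_1\setminus N_2$, so the replacement is harmless.

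Let $T_i:N_1\to[0,\infty]$ denote the inner exit time of $(N_1,N'_2)$, continuous by regularity. Assume for contradiction that for every $t_0\in\IR^+$ there is no solution $u:[t_0,\infty)\to N_1\setminus N'_2$ of $\Phi_{y_0}$. For $(t_0,x_0)\in N_1$, strong admissibility of the isolating neighbourhood containing $r(N_1\setminus N'_2)$ prevents explosion while the orbit remains in $N_1\setminus N'_2$, so $(t_0,x_0)\chi_{y_0}t$ is defined on $[0,T_i(t_0,x_0)]$; and $T_i(t_0,x_0)=\infty$ would produce a forbidden solution $u(t):=\Phi_{y_0}(t,t_0,x_0)$. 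Hence $T_i<\infty$ throughout $N_1$. Moreover, (IP2) together with the closedness of $N_1$ forces $(t_0,x_0)\chi_{y_0}T_i(t_0,x_0)\in N'_2$, since the orbit cannot leave $N_1$ without first passing through $N'_2$.

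I then define
$$H:N_1\times[0,1]\to N_1,\qquad H(x,s):=x\chi_{y_0}(sT_i(x)).$$
It is continuous by continuity of $T_i$ and the flow, and satisfies $H(\cdot,0)=\id_{N_1}$, $H(N_1\times\{1\})\subset N'_2$, and $H(x,s)=x$ for all $x\in N'_2$ (since $T_i$ vanishes there). Consequently $H$ descends to a pointed null-homotopy of the identity on $(N_1/N'_2,[N'_2])$, giving $\homI(N_1/N'_2,N'_2)=\bar{0}$, the desired contradiction. The main obstacle is really the continuity of the flow-time cutoff $T_i$, which need not hold for a general index pair; this is precisely what Lemma~\ref{le:140107-1916} provides, while strong admissibility handles the complementary issue that the flow remain defined up to the exit time.
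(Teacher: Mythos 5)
Your proof is correct and follows essentially the same strategy as the paper's: regularize the index pair via Lemma~\ref{le:140107-1916} to obtain a continuous inner exit time, then argue by contraposition that the absence of a solution eventually remaining in $N_1\setminus N_2$ makes the exit time finite everywhere, so that $H(\lambda,x):=x\chi_{y_0}(\lambda T_i(x))$ is a pointed deformation of $N_1/N_2$ onto the base point, forcing $\homI=\bar 0$. You are slightly more explicit than the paper on two points the paper leaves implicit: (i) that replacing $N_2$ by the enlarged exit set $N'_2$ is harmless both for the homology hypothesis (any index pair for $(y_0,K)$ represents the index, up to isomorphism) and for the conclusion (a solution avoiding $N'_2\supset N_2$ a fortiori avoids $N_2$); and (ii) that strong admissibility of the isolating neighbourhood from (IP4) rules out blow-up of the orbit before the exit time, which is needed so that $H(1,\cdot)$ actually lands in $N'_2$.
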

	
	\begin{proof}
		In view of Lemma \ref{le:140107-1916}, one may assume without loss of generality
		that $(N_1, N_2)$ is a regular index pair. Suppose that $(N_1, N_2)$ is such that
		for every $t_0\in\IR^+$ there does not exist a solution $u:\;\left[t_0,\infty\right[\to N_1\setminus N_2$
		of $\Phi_{y_0}$. Then the (continuous) exit time $T_i$ satisfies
		\begin{equation*}
		T_i(x) = \sup_{t\in\IR^+} \{ x\chi_{y_0} \left[0,t\right]\subset N_1\setminus N_2 \} < \infty
		\text{ for all }x\in N_1.
		\end{equation*}  
		
		It is easy to see that for each $x\in N_1$, $x\chi_{y_0}\left[0,T_i(x)\right]\subset N_1$ and
		$x\chi_{y_0} T_i(x)\in N_2$. One can define $H:\;\left[0,1\right]\times N_1 \to N_1$ by
		\begin{equation*}
		H(\lambda, x) := x\chi_{y_0} (\lambda T_i(x)).
		\end{equation*}
		$H$ is continuous, and $H(\lambda,x) = x$ for all $(\lambda, x)\in\left[0,1\right]\times N_2$.
		Consequently, $(N_1/N_2, N_2)$ and $(N_2/N_2, N_2)$ are homotopy equivalent, completing
		the proof because $\homI(N_2/N_2, N_2) = \bar 0$.
	\end{proof}
	
	\begin{lemma}
		\label{le:141210-1901}
		Let $(N_1, N_2)$ be a regular index pair with respect to $y_0\in Y$. Then
		the projection $p:\; N_1\to N_1/N_2$ induces an isomorphism 
		$p_*:\; \Hom_*(N_1, N_2)\to \Hom_*(N_1/N_2, N_2/N_2)$.
	\end{lemma}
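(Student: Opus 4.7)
The plan is to show that $(N_1, N_2)$ is a \emph{good pair}---namely, that $N_2$ admits an open neighborhood $V$ in $N_1$ that strongly deformation retracts onto $N_2$---and then to invoke the standard argument that $p_*$ is an isomorphism for such pairs in any homology theory satisfying the usual Eilenberg--Steenrod axioms with excision.

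To construct $V$, I would use the continuity of the inner exit time $T_i$, which vanishes identically on $N_2$. Set
\begin{equation*}
V := T_i^{-1}\bigl([0, 1)\bigr),
\end{equation*}
an open neighborhood of $N_2$ in $N_1$, and define
\begin{equation*}
H : V \times [0, 1] \to V, \qquad H(x, s) := x \chi_{y_0}\bigl( s\, T_i(x) \bigr).
\end{equation*}
Then $H(x, 0) = x$, and $H(x, s) = x$ for $x \in N_2$ since $T_i$ vanishes there. The semigroup property of $\chi_{y_0}$ gives the cocycle identity $T_i(x \chi_{y_0} u) = T_i(x) - u$ for $u \in [0, T_i(x)]$; hence $T_i(H(x, s)) = (1 - s) T_i(x) < 1$, so $H$ does land in $V$. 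Moreover, $H(x, 1) = x \chi_{y_0} T_i(x) \in N_2$: by (IP2) the orbit cannot leave $N_1$ without first touching $N_2$, so at the finite time $T_i(x)$ it lies in $N_2$ (using closedness of $N_2$ via (IP1)). Thus $H$ is a strong deformation retraction of $V$ onto $N_2$, and it descends to a strong deformation retraction of $V/N_2$ onto the base point $N_2/N_2$.

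With the good-pair property established, the conclusion is a routine chase. The long exact sequence of the triple $(N_1, V, N_2)$ and homotopy invariance give $\Hom_*(N_1, N_2) \cong \Hom_*(N_1, V)$; excision (with $N_2$ as the excised closed set, whose closure $N_2$ lies in the open set $V \subset N_1$) further yields $\Hom_*(N_1, V) \cong \Hom_*(N_1 \setminus N_2, V \setminus N_2)$. The identical chain of isomorphisms applied on the quotient side yields $\Hom_*(N_1/N_2, N_2/N_2) \cong \Hom_*\bigl((N_1/N_2) \setminus \{N_2\}, V/N_2 \setminus \{N_2\}\bigr)$. Since $p$ restricts to a homeomorphism between the respective ``exterior'' pairs (as $N_2$ is closed and $p$ is injective outside $N_2$), the commutativity of the resulting diagram forces $p_*$ to be an isomorphism.

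The principal obstacle is the careful verification that $H$ is well-defined as a map into $V$ and satisfies $H(x, 1) \in N_2$; both hinge on the cocycle identity for $T_i$ and on (IP2), requiring attention to the limiting behaviour at the finite value $T_i(x)$. Once regularity has supplied the neighborhood deformation retraction, the rest of the proof is purely formal homological algebra.
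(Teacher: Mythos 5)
Your proof is correct and follows essentially the same route as the paper's: use continuity of the inner exit time $T_i$ (regularity) to produce a collar neighbourhood of $N_2$ in $N_1$, deformation retract along the flow, and finish by excision. The only cosmetic differences are that you take the open neighbourhood $V = T_i^{-1}([0,1))$ and run the good-pair argument via the triple $(N_1,V,N_2)$, whereas the paper takes the closed set $N'_2 = T_i^{-1}([0,1])$ and defines the flow homotopy on all of $N_1$ (via $\lambda\min\{T_i(x),1\}$) so that the isomorphisms $\Hom_*(N_1,N_2)\to\Hom_*(N_1,N'_2)$ and its quotient analogue fall out directly before invoking excision; both packagings of the homological bookkeeping are standard and equivalent.
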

	
	\begin{proof}
		The (inner) exit time $T_i(x):=\sup\{t\in\IR^+:\; x\chi_{y_0}\left[0,t\right] \subset N_1\setminus N_2\}$
		is continuous. Therefore, $N'_2 := N^{-1}_2 = \{x\in N_1:\; x\chi_{y_0} s\in N_2\text{ for some }
		s\in\left[0,1\right]\}$ is a neighbourhood of $N_2$ in $N_1$.
		Define $H:\; \left[0,1\right]\times N_1 \to N_1$ by
		$H(\lambda, x) := x\chi_{y_0} (\lambda \min\{ T_i(x), 1\})$. Using $H$, we conclude
		that there are inclusion induced isomorphisms
		\begin{align*}
		&\Hom_*(N_1, N_2) \to \Hom_*(N_1, N'_2) \\
		&\Hom_*(N_1/N_2, \{N_2\}) \to \Hom_*(N_1/N_2, N'_2/N_2).
		\end{align*}
		
		Using the excision property of homology, 
		it follows that $p$ induces an isomorphism $\Hom_*(N_1, N'_2) \to \Hom_*(N_1/N_2, N'_2/N_2)$.
		The proof is complete.
	\end{proof}
	
	\begin{proof}[of Lemma \ref{le:140113-1642}]
		In view of Lemma \ref{le:141210-1901}, it is sufficient
		to consider the inclusion induced mapping
		\begin{equation*}
		j': \dirlim (\Hom_*(N_1(\alpha), N_2(\alpha)), \Hom_*(i_{\alpha,\beta})) \to \Hom_*(N_1, N_2).
		\end{equation*}
		$j'$ is an isomorphism since $\Hom$ is assumed to be
		a homology theory with compact supports (see e.g. \cite[Theorem 13 in Section 4.8]{spanier}).
	\end{proof}
	
	
	
	\begin{lemma}
		\label{le:140114-1611}
		Let the direct system $(A_\alpha, f_{\alpha,\beta})$ be defined as
		in Lemma \ref{le:140113-1642}, $a<c$, and $\alpha := [a,c]\subset [b,c] =: \beta$.
		
		Then, $f_{\alpha, \beta}$ is an isomorphism.
	\end{lemma}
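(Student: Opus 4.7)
\emph{Plan.} My strategy is to show that the inclusion of pairs $(N_1(\alpha), N_2(\alpha))\hookrightarrow (N_1(\beta),N_2(\beta))$ induces a homotopy equivalence of pointed quotient spaces, from which the claim follows immediately by functoriality of $\Hom_*$. The homotopy inverse uses the semiflow $\chi_{y_0}$ to push points with time coordinate in $[b, a)$ forward, either until they reach time $a$ or until they enter $N_2$, whichever occurs first.

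By Lemma \ref{le:140107-1916}, I may assume without loss of generality that $(N_1,N_2)$ is a regular index pair, so that the inner exit time $T_i$ is continuous. I would then define a continuous function
\[
\eta(t,x) := \max\bigl\{0,\,\min\{a-t,\,T_i(t,x)\}\bigr\},\quad (t,x)\in N_1([b,c]),
\]
which vanishes on $N_1([a,c])$ and on $N_2([b,c])$ (using $T_i\equiv 0$ on $N_2$), and which satisfies $\eta(t,x)\leq T_i(t,x)$ throughout. The map $H_\lambda(t,x):=(t,x)\chi_{y_0}(\lambda\,\eta(t,x))$ is then a continuous self-homotopy of $N_1([b,c])$ starting at the identity and mapping $N_2([b,c])$ into itself. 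Moreover, $H_1(t,x)$ lies either in $N_1([a,c])$ (when the flow survives in $N_1\setminus N_2$ up to time $a$) or in $N_2([b,c])$: in the latter case the regularity of $(N_1,N_2)$ forces $(t,x)\chi_{y_0} T_i(t,x)\in N_2$ whenever $T_i(t,x)<\infty$.

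Passing to the quotient, $H$ descends to a homotopy $\bar H_\lambda$ on $N_1(\beta)/N_2(\beta)$ from $\id$ to a map whose image is contained in $\bar i_{\alpha,\beta}(N_1(\alpha)/N_2(\alpha))$. Defining $\bar r\colon N_1(\beta)/N_2(\beta)\to N_1(\alpha)/N_2(\alpha)$ by $[(t,x)]_\beta\mapsto [H_1(t,x)]_\alpha$ when $H_1(t,x)\in N_1(\alpha)$ and by $\bar r([(t,x)]_\beta) := \ast$ otherwise, one obtains a continuous homotopy inverse to $\bar i_{\alpha,\beta}$: the identity $\bar r\circ \bar i_{\alpha,\beta} = \id$ is immediate from $H_1|_{N_1(\alpha)} = \id$ (since $\eta\equiv 0$ there), while $\bar i_{\alpha,\beta}\circ \bar r = \bar H_1\simeq \id$ via $\bar H_\lambda$.

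The main obstacle will be verifying continuity of $\bar r$ across the transitional locus $\{(t,x) : T_i(t,x) = a-t,\ t<a\}$, where the flow reaches $N_2$ exactly at time $a$. Here regularity is used twice: first to ensure $T_i$ (and hence $\eta$) is continuous, and second to guarantee that in this boundary situation $H_1(t,x)\in N_2([a,c])$, so that the two branches of the definition of $\bar r$ glue to yield a well-defined continuous pointed map. Given these ingredients, $\bar i_{\alpha,\beta}$ is a pointed homotopy equivalence and $f_{\alpha,\beta}=\Hom_*(\bar i_{\alpha,\beta})$ is an isomorphism.
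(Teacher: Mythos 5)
Your construction is correct, and the underlying idea is the same as the paper's (push forward along $\chi_{y_0}$ for a time controlled by the continuous inner exit time), but the two proofs execute it differently. The paper retracts \emph{both} $(N_1(\alpha),N_2(\alpha))$ and $(N_1(\beta),N_2(\beta))$ onto a common ``thin'' target, namely $(N_1(\{c\})\cup N_2(\gamma),\,N_2(\gamma))$, by flowing each point forward until it reaches time $c$ or enters $N_2$; after that a separate \emph{excision} step (where the hypothesis $a<c$ is needed) identifies the two retracts. You instead define $\eta(t,x)=\max\{0,\min\{a-t,T_i(t,x)\}\}$ and flow only the extra portion $[b,a)$ forward to time $a$ (or into $N_2$), producing a strong deformation retraction of $N_1(\beta)$ onto $N_1(\alpha)\cup N_2(\beta)$ that fixes $N_1(\alpha)$ pointwise; passing to quotients then gives a direct pointed homotopy inverse to $\bar i_{\alpha,\beta}$, with no excision needed. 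Your version is in a sense more economical (one retraction plus a direct homotopy inverse vs.~two retractions plus excision), and your continuity check of $\bar r$ across the locus $\{T_i(t,x)=a-t\}$ is exactly the place where the work happens.

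Two small remarks. First, the invocation of Lemma~\ref{le:140107-1916} is unnecessary: Lemma~\ref{le:140113-1642}, whose direct system you are reusing, already hypothesizes that $(N_1,N_2)$ is regular, and the reduction in Lemma~\ref{le:140107-1916} in fact \emph{changes} $N_2$ (hence the whole direct system), so it would not even be a clean ``without loss of generality'' if regularity were not assumed. Second, the fact that $(t,x)\chi_{y_0}T_i(t,x)\in N_2$ when $T_i(t,x)<\infty$ is not really a consequence of regularity (continuity of $T_i$); it follows from property (IP2) together with closedness of $N_1$ and $N_2$ and continuity of the semiflow. This is a minor misattribution; the conclusion you need is correct.
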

	
	\begin{proof}
		Let $h>0$ and $\gamma:=\left[d-h,d\right]\subset \IR^+$ be an otherwise
		arbitrary interval. Since $(N_1, N_2)$ is assumed to be a regular index pair,
		the inner exit time $T(x) := \sup\{t\in\IR^+:\; x\chi_{y_0}\left[0,t\right]\subset N_1\setminus N_2\}$ is
		continuous. 
		
		We can define a continuous mapping $H:\; \left[0,1\right]\times N_1(\gamma)\to N_1(\gamma)$ by
		\begin{equation*}
		H(\lambda, (d-t,x)) := \begin{cases}
		(d-t,x)\chi_{y_0} (\lambda t) & \lambda t\leq T(x) \\
		(d-t,x)\chi_{y_0} T(x) & \lambda t> T(x).
		\end{cases}
		\end{equation*}
		It follows that $(N'_1(\gamma), N_2(\gamma))$ is a strong deformation retract
		of $(N_1(\gamma), N_2(\gamma))$, where we set
		\begin{equation*}
		N'_1 := N_1(\{d\}) \cup N_2.
		\end{equation*}
		
		Therefore, the inclusion $(N'_1(\gamma), N_2(\gamma))\subset (N_1(\gamma), N_2(\gamma))$
		defines an isomorphism
		\begin{equation*}
		\Hom_*(N'_1(\gamma), N_2(\gamma)) \to \Hom_*(N_1(\gamma), N_2(\gamma)).
		\end{equation*}
		Moreover, the inclusion $(N'_1(\alpha), N_2(\alpha)) \to (N'_1(\beta), N_2(\beta))$
		induces an isomorphism in homology by excision\footnote{Here, the assumption $a\neq c$ is used.}. 
		
		Summing up, there is a commutative diagram, where every arrow denotes an isomorphism
		induced by the inclusion of the respective subspaces:
		\vspace{5mm}
		\begin{equation*}
		\xymatrix@1@C=3mm{
			\Hom_*(N_1(\alpha), N_2(\alpha)) \ar@/^2pc/[rrr]^-{f_{\alpha,\beta}}&
			\Hom_*(N'_1(\alpha), N_2(\alpha)) \ar[l] \ar[r]&
			\Hom_*(N'_1(\beta), N_2(\beta)) \ar[r] &
			\Hom_*(N_1(\beta), N_2(\beta))
		}
		\end{equation*}
	\end{proof}
	
	\begin{lemma}
		\label{le:150306-1820}
		Let $y_0\in Y$, $(N_1, N_2)$ be a regular index pair with respect to $y_0$ and
		$\eps>0$ be arbitrary (not necessarily small). Let $\Gamma$ denote
		the set of all non-empty compact subintervals of $\IR^+$ ordered by inclusion. Let 
		$\alpha,\beta\in\Gamma$, $N^{-\eps}_2 := N^{-\eps}_2(N_1)$ and
		$A^\eps_\alpha := \Hom_*(N_1(\alpha),N^{-\eps}_2(\alpha))$. Finally, let $f^\eps_{\alpha,\beta}:\;A^\eps_\alpha\to A^\eps_\beta$
		be inclusion induced. 
		
		Then for every\footnote{In contrast to Lemma \ref{le:140114-1611}, the case $\alpha = \{c\}$ is included.}
		pair $\left[a,c\right]\subset\left[b,c\right]$ of subintervals of $\IR^+$, it
		holds that $f^\eps_{\alpha, \beta}$ is an isomorphism.
	\end{lemma}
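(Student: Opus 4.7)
The plan is to reduce to Lemma \ref{le:140114-1611} by observing that $(N_1, N^{-\eps}_2)$ is itself a regular index pair, and then to handle the newly admitted case $\alpha = \{c\}$ by a parallel but more delicate argument. First I would verify the reduction: $N^{-\eps}_2 = \{x \in N_1 : T_i(x) \leq \eps\}$ is closed because $T_i$ is continuous; axiom (IP2) for $(N_1, N^{-\eps}_2)$ follows from (IP2) for $(N_1, N_2)$ together with $N_2 \subset N^{-\eps}_2$; (IP3) is positive invariance of $N^{-\eps}_2$ in $N_1$, which follows from positive invariance of $N_2$ and the behaviour of trajectories after hitting $N_2$; and the inner exit time of $N^{-\eps}_2$ is $T^\eps(x) := \max(T_i(x) - \eps, 0)$, continuous because $T_i$ is. Hence $(N_1, N^{-\eps}_2)$ is a regular index pair.

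For every pair $[a, c] \subset [b, c]$ with $a < c$, Lemma \ref{le:140114-1611} applied to the regular index pair $(N_1, N^{-\eps}_2)$ gives that $f^\eps_{\alpha, \beta}$ is an isomorphism. For the case $\alpha = \{c\}$, I factorise $f^\eps_{\{c\}, \beta} = f^\eps_{[a, c], \beta} \circ f^\eps_{\{c\}, [a, c]}$ with $a \in (b, c)$; the first factor is an isomorphism by the preceding case, so it suffices to show $f^\eps_{\{c\}, [a, c]}$ is an isomorphism. For this I mimic the construction from Lemma \ref{le:140114-1611}: the map $H(\lambda, (c - t, x)) := (c - t, x)\chi_{y_0}(\min(\lambda t, T^\eps(c - t, x)))$ is a strong deformation retract of $(N_1([a, c]), N^{-\eps}_2([a, c]))$ onto $(N'_1([a, c]), N^{-\eps}_2([a, c]))$, where $N'_1 := N_1(\{c\}) \cup N^{-\eps}_2$; then one excises $U := N^{-\eps}_2([a, c))$ from the latter pair to arrive at $(N_1(\{c\}), N^{-\eps}_2(\{c\}))$. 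The decisive improvement over Lemma \ref{le:140114-1611} is that $N^{-\eps}_2$ is a closed neighbourhood of $N_2$ in $N_1$ (its interior contains the open set $\{T_i < \eps\}$), so at the top slice $t = c$ the set $N^{-\eps}_2(\{c\})$ has non-empty interior in $N_1(\{c\})$---the ``collar'' that allows excision to work right up to the slice, something that forced the restriction $a < c$ in Lemma \ref{le:140114-1611}.

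The main obstacle will be verifying the excision condition $\overline{U} \subset \interior_{N'_1([a, c])} N^{-\eps}_2([a, c])$ at points $(c, x) \in \overline{U}$ lying on the boundary $\{T_i = \eps\}$ of $N^{-\eps}_2$. For $(c, x) \in \overline{U}$ with $T_i(c, x) < \eps$, continuity of $T_i$ immediately places $(c, x)$ in the interior; the delicate points are those with $T_i(c, x) = \eps$, which I would treat either by a direct local analysis of $T_i$ and the semiflow near $t = c$, or by first running the same argument with $N^{-(\eps + \delta)}_2$ in place of $N^{-\eps}_2$ (where the boundary of the exit set lies strictly in the interior of $N^{-\eps}_2$) and then comparing the two via the long exact sequence of the triple $(N_1([a, c]), N^{-(\eps + \delta)}_2([a, c]), N^{-\eps}_2([a, c]))$ as $\delta \to 0$, the difference pair being acyclic by a push along the semiflow within $N^{-(\eps+\delta)}_2$.
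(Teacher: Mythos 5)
Your reduction of the case $a<c$ to Lemma \ref{le:140114-1611} via the observation that $(N_1, N^{-\eps}_2)$ is itself a regular index pair (with inner exit time $\max(T_i-\eps,0)$) is correct and is a clean way to dispose of that part. But the case $\alpha=\{c\}$ --- the whole point of the lemma --- has a genuine gap, and you have in fact identified it yourself: you choose the homotopy $H$ so that it pushes points all the way into $N^{-\eps}_2$ (flowing for time $\min(\lambda t, T^\eps)$), and then must excise $U=N^{-\eps}_2([a,c[)$; but $\cl U$ at the top slice meets the boundary $\{T_i=\eps\}$ of $N^{-\eps}_2(\{c\})$, which is not in the interior of $N^{-\eps}_2([a,c])$ inside $N'_1$. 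The ``collar'' you invoke ($N^{-\eps}_2$ being a closed neighbourhood of $N_2$) is precisely what your retraction consumes: by flowing until $T^\eps=0$ you land on the boundary of $N^{-\eps}_2$, leaving no room. The proposed repair via $N^{-(\eps+\delta)}_2$ does not work as stated (the boundary of the larger set $N^{-(\eps+\delta)}_2$ lies outside $N^{-\eps}_2$, not inside it), and the $\delta\to 0$ comparison is not carried out.

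The paper's argument avoids the obstacle by keeping the retraction that flows with respect to the \emph{original} exit time $T_i$, as in Lemma \ref{le:140114-1611}: $H(\lambda,(c-t,x))=(c-t,x)\chi_{y_0}(\min(\lambda t, T_i(c-t,x)))$. Then $H(1,\cdot)$ pushes into $N_2$, not $N^{-\eps}_2$, so the retract pair is $(N'_1,N'_2):=(N_1(\{c\})\cup N_2([a,c]),\, N^{-\eps}_2(\{c\})\cup N_2([a,c]))$ and the set to be excised is $N_2([a,c[)$. Now the collar survives: $N^{-\eps/2}_2\cap N'_1$ is a neighbourhood of $N_2$ in $N'_1$ (by continuity of $T_i$) that is contained in $N'_2$, because at times $t<c$ one has $N'_1([a,c[)=N_2([a,c[)$ and at $t=c$ one has $N^{-\eps/2}_2(\{c\})\subset N^{-\eps}_2(\{c\})$. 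Hence $\cl(N_2([a,c[))\subset N_2([a,c])\subset\interior_{N'_1}N'_2$ and excision applies. You can keep your own homotopy provided you only flow into a strictly smaller collar, say $T^{\eps/2}$, so that the image at $t=c$ lands inside $\{T_i\le\eps/2\}\subset\interior N^{-\eps}_2(\{c\})$; the point is simply not to flow all the way to the boundary of $N^{-\eps}_2$.
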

	
	\begin{proof}
		As in the proof of Lemma \ref{le:140114-1611}, it can be shown that
		$(N'_1, N'_2) := (N_1(\{c\})\cup N_2(\left[a,c\right]), N^{-\eps}_2(\{c\})\cup N_2(\left[a,c\right])$ is a strong deformation retract
		of $(N_1\left[a,c\right], N^{-\eps}_2(\left[a,c\right]))$. 
		
		We have $N^{-\eps}_2 = T^{-1}(\left[0,\eps\right])$, where $T:\;N_1\to\left[0,\infty\right]$ denotes the
		inner exit time $T(x) := \sup \{t\in\IR^+:\; x\chi_{y_0}\left[0,t\right]\subset N_1\setminus N_2\}$. Due to the continuity of $T$,
		$N^{-\eps/2}_2(N_1)\cap N'_1$ is a neighbourhood of $N_2$ in $N'_1$. Hence, $(N_1(\{c\}), N^{-\eps}_2(\{c\}))\subset (N'_1, N'_2)$ induces
		an isomorphism in homology by excision.
		
		Further details are omitted.
	\end{proof}
	
	\begin{lemma}
		\label{le:150114-1618}
		Let $(N_1, N_2)$ be a regular index pair with respect to $y_0\in Y$, $\eps>0$, and let the direct system $(A^\eps_\alpha, f_{\alpha,\beta})$
		be defined as in Lemma \ref{le:150306-1820}.
		
		Suppose we are given a strictly monotone increasing sequence $(a_n)_n$ in $\IR^+$ with $a_n\to\infty$. 
		Define a direct system $(B^\eps_k, g_{k,l})$, where $k,l\in \IN$, $k\leq l$ and $B^\eps_k := A^\eps_{\{a_k\}}$.
		Moreover, for $k\leq l$ and in view of Lemma \ref{le:150306-1820}, we can define
		\begin{equation*}
		g_{k,l} := f^{-1}_{\{a_l\}, \left[a_k,a_l\right]}\circ f_{\{a_k\}, \left[a_k,a_l\right]}.
		\end{equation*}
		
		Then, the inclusions $i_k:\; (N_1(\{a_k\}), N^{-\eps}_2(\{a_k\}) \to (N_1, N^{-\eps}_2)$ induce an isomorphism
		\begin{equation*}
		g:\; \dirlim (B^{\eps}_k, g_{k,l}) \to \Hom_*[N_1, N^{-\eps}_2],\quad \rep{k,x}\mapsto \Hom_*(p\circ i_k)(x),
		\end{equation*}
		where $p$ is the canonical projection onto the quotient space as given by Lemma \ref{le:141210-1901}.
	\end{lemma}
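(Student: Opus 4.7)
The plan is to apply Lemma \ref{le:140113-1642} to the pair $(N_1, N^{-\eps}_2)$ in order to obtain a direct-limit representation over all compact subintervals $\alpha\in\Gamma$, and then to identify that direct limit with $\dirlim_k B^\eps_k$ by means of the isomorphisms provided by Lemma \ref{le:150306-1820}.

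As a preliminary step, I would verify that $(N_1, N^{-\eps}_2)$ is itself a regular index pair for $(y_0, K)$. The index-pair property follows from Lemma 2.7 in \cite{article_naci_1} together with Lemma 2.8 in \cite{article_naci}, exactly as in the last paragraph of the proof of Lemma \ref{le:140107-1916}; regularity is immediate since the new inner exit time equals $\max(T_i - \eps, 0)$, where $T_i$ is the (continuous) inner exit time of the original regular pair. The argument of Lemma \ref{le:140113-1642} (combining Lemma \ref{le:141210-1901} with the compact-supports property of $\Hom_*$) then yields an isomorphism
\begin{equation*}
j:\;\dirlim_{\alpha\in\Gamma} A^\eps_\alpha \longrightarrow \Hom_*[N_1, N^{-\eps}_2],\qquad \rep{\alpha, x}\mapsto \Hom_*(p\circ i_\alpha)(x).
\end{equation*}

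Next I would define $\phi:\dirlim_k B^\eps_k \to \dirlim_{\alpha} A^\eps_\alpha$ by $\rep{k,x}\mapsto \rep{\{a_k\}, x}$ and set $g := j\circ \phi$. Well-definedness of $\phi$ is automatic from the construction of $g_{k,l}$: the equality $f^\eps_{\{a_k\},[a_k,a_l]}(x) = f^\eps_{\{a_l\},[a_k,a_l]}(g_{k,l}(x))$ in $A^\eps_{[a_k,a_l]}$ gives $\rep{\{a_k\},x} = \rep{\{a_l\},g_{k,l}(x)}$ in $\dirlim_\alpha A^\eps_\alpha$. Bijectivity of $\phi$ is the substance of the proof and rests on Lemma \ref{le:150306-1820}. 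For surjectivity, given $\rep{\alpha,x}$ with $\alpha = [c,d]$, I would choose $l$ with $a_l\geq d$; then $\alpha, \{a_l\}\subset [c,a_l]$ and since $f^\eps_{\{a_l\},[c,a_l]}$ is an isomorphism, the preimage $y := (f^\eps_{\{a_l\},[c,a_l]})^{-1}\bigl(f^\eps_{\alpha,[c,a_l]}(x)\bigr)\in B^\eps_l$ satisfies $\phi\rep{l,y} = \rep{\alpha,x}$. For injectivity, suppose $\phi\rep{k,x} = 0$; then there is a $\gamma = [c,d]\ni a_k$ with $f^\eps_{\{a_k\},\gamma}(x)=0$. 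Picking $l\geq k$ with $a_l\geq d$ and using the factorization $f^\eps_{\{a_k\},[c,a_l]} = f^\eps_{[a_k,a_l],[c,a_l]}\circ f^\eps_{\{a_k\},[a_k,a_l]}$ together with the fact that $f^\eps_{[a_k,a_l],[c,a_l]}$ is an isomorphism (Lemma \ref{le:150306-1820}) forces $f^\eps_{\{a_k\},[a_k,a_l]}(x)=0$, whence $g_{k,l}(x)=0$ and $\rep{k,x}=0$ in $\dirlim_k B^\eps_k$.

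The main obstacle I anticipate is the bookkeeping in the injectivity step, where the witnessing interval $\gamma$ coming from the vanishing in $\dirlim_\alpha A^\eps_\alpha$ must be enlarged to an interval $[c,a_l]$ whose right endpoint is compatible with the singletons appearing in the $B^\eps$-system; everything else in the argument is formal once the preparatory regularity of $(N_1, N^{-\eps}_2)$ is established.
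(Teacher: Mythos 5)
Your proposal is correct and takes essentially the same approach as the paper: apply Lemma \ref{le:140113-1642} to $(N_1, N^{-\eps}_2)$ to get the isomorphism $j$ over compact intervals, identify $g$ with $j$ composed with the obvious map from the sequence-indexed system, and use the isomorphisms of Lemma \ref{le:150306-1820} for surjectivity and injectivity. Your explicit check that $(N_1, N^{-\eps}_2)$ is again a regular index pair (via the formula $\max(T_i-\eps,0)$ for the new inner exit time) fills in a detail the paper leaves implicit when invoking Lemma \ref{le:140113-1642} for this pair.
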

	
	\begin{proof}
		First of all, we need to prove that $g$ is well defined. Let there be given two
		representations
		$\rep{k,x} = \rep{l,x'}$ of the same element in $\dirlim (B^\eps_k, g_{k,l})$, that is, $g_{k,l}(x) = x'$.
		The following diagram with inclusion induced morphisms is commutative.
		\begin{equation*}
		\xymatrix{
			B^\eps_k \ar[r] \ar@/^2pc/[rr]^-{g_{k,l}} \ar[rd]_-{\Hom_*(i_k)}& 
			\Hom_*(N_1(\left[a_k,a_l\right]), N^{-\eps}_2(\left[a_k,a_l\right])) \ar[d] &
			B^\eps_l \ar[l] \ar[ld]^-{\Hom_*(i_l)}\\
			&\Hom_*(N_1, N_2)
		}
		\end{equation*}
		Consequently, $g$ is well defined.
		
		Let the isomorphism $j:\;\dirlim (A^\eps_\alpha, f_{\alpha,\beta}) \to \Hom_*[N_1, N^{-\eps}_2]$
		be given by Lemma \ref{le:140113-1642} with $(N_1, N_2)$
		replaced by $(N_1, N^{-\eps}_2)$. It is clear
		that $j(\rep{a_k,x}) = g(\rep{k,x})$ for all $\rep{k,x}\in\dirlim(B^{\eps}_k, g_{k,l})$.
		Letting $y\in \Hom_*(N_1, N^{-\eps}_2)$, there exists $\rep{\alpha, x}\in \dirlim(A^\eps_k,f_{\alpha,\beta})$
		such that $j(\rep{\alpha,x}) = y$. We can assume without loss of generality
		that $\alpha = \left[a_k,a_l\right]$ for $k,l\in\IN$ with $k\leq l$. 
		It follows from Lemma \ref{le:150306-1820} that $j(\rep{\alpha,x}) = j(\rep{\{a_l\}, x'}) = y$
		for some $x'\in A^\eps_{\{a_k\}} = B^\eps_k$. Thus, $g$ is an epimorphism.
		
		Assume that $g(\rep{k,x}) = 0$. Since $j$ is an isomorphism, it follows
		that $\rep{\{a_k\},x}=0$, so there exists a compact interval $\left[a,b\right]$
		with $a\leq a_k\leq b$ such that $f_{\{a_k\}, \left[a,b\right]}(x) = 0$. We even
		have $f_{\{a_k\}, \left[a,a_l\right]}(x) = f_{\left[a_k,a_l\right], \left[a,a_l\right]} \circ f_{\{a_k\},\left[a_k,a_l\right]}(x) = 0$ provided that $b\leq a_l$, so $f_{\{a_k\},\left[a_k,a_l\right]}=0$.
		From Lemma \ref{le:150306-1820}, one obtains $g_{k,l}(x) = f^{-1}_{\{a_l\},\left[a_k,a_l\right]}\circ f_{\{a_k\}, \left[a_k,a_l\right]}(x) =  0$, so
		$\rep{k,x} = \rep{l,g_{k,l}(x)} = \rep{l,0} = 0$. We have
		proved that $g$ is a monomorphism.
	\end{proof}
\end{section}


\begin{section}{Uniformly connected attractor-repeller decompositions}
	In analogy to the previous section, let $V\subset Y\times X$ and define\footnote{for arbitrary spaces $Y$ and $X$, in particular also $Y=\IR^+$}
	\label{sym:fibres}
	\begin{align*}
	V(y) &:= \{x:\; (y,x)\in V\}\\
	V(U) &:= V\cap (U\times X)&\text{where } U\subset Y.
	\end{align*}
	
	\begin{definition}
		\index{uniformly connected attractor-repeller decomposition}
		Let $(y_0,K,A,R)$ be an attractor-repeller decomposition. We say
		that $A$ and $R$ are {\em not uniformly connected} (in $K$) if there exists
		an $y\in\omega(y_0)$, an open neighbourhood $U_A$ of $A$ in $\Hull^+(y_0)\times X$
		and an open neighbourhood $U_R$ of $R$ in $\Hull^+(y_0)\times X$
		such that $U_A\cap U_R = \emptyset$ and $K(y)\subset U_A(y)\cup U_R(y)$.
		
		Otherwise, $(y_0,K,A,R)$ is called {\em uniformly connected}.
	\end{definition}
	
	The following theorem is the main result of this section, and the rest
	of the section is devoted to its proof. The strategy is to exploit 
	Lemma \ref{le:150114-1618} together with the assumption
	that $A$ and $R$ are not uniformly connected. 
	
	\begin{theorem}
		\label{th:141205-1538}
		Let $(y_0,K,A,R)$ be an attractor-repeller decomposition, and let
		there exist a strongly admissible isolating neighbourhood $N\subset\Hull^+(y_0)\times X$
		for $K$.
		
		The connecting homomorphism of the associated attractor-repeller sequence
		is trivial if $(y_0, K, A, R)$ is not uniformly connected.
	\end{theorem}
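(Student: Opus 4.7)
The plan is to combine the direct-limit representation of the homology Conley index from Lemma~\ref{le:150114-1618} with a fiberwise splitting of the index triple supplied by non-uniform connectedness. Assume $(y_0,K,A,R)$ is not uniformly connected and fix a witness $y\in\omega(y_0)$ together with disjoint open neighbourhoods $U_A,U_R\subset\Hull^+(y_0)\times X$ of $A,R$, with disjoint closures after a harmless shrink, such that $K(y)\subset U_A(y)\cup U_R(y)$. Since $y\in\omega(y_0)$, pick $a_n\to\infty$ in $\IR^+$ with $y_0^{a_n}\to y$. Fix a regular index triple $(N_1,N_2,N_3)$ for $(y_0,K,A,R)$; regularity of both $(N_1,N_3)$ and $(N_2,N_3)$ is obtained by enlarging exit sets via Lemma~\ref{le:140107-1916}.

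The technical heart is a fiberwise splitting claim: after possibly shrinking the isolating neighbourhoods from IP4, there is $n_0$ such that for every $n\geq n_0$,
\begin{equation*}
N_1(\{a_n\})\setminus N_3(\{a_n\})\subset U_A\cup U_R,\quad N_1(\{a_n\})\cap U_A\subset N_2(\{a_n\}),\quad N_2(\{a_n\})\cap U_R\subset N_3(\{a_n\}),
\end{equation*}
tacitly identifying $\{a_n\}\times X$ with $X$. I would prove each inclusion by contradiction: a counterexample sequence would, via $N_1\setminus N_3\subset r^{-1}(N)$ (respectively $N_2\setminus N_3\subset r^{-1}(N_A)$ for some isolating $N_A$ of $A$, from IP4 for $(N_2,N_3)$), combined with strong admissibility of $N$ (resp.\ $N_A$) and $y_0^{a_n}\to y$, produce a subsequential limit in $K(y)$ (resp.\ $A(y)$) that nonetheless lies in the complement of $U_A(y)\cup U_R(y)$, in the complement of $U_A(y)$, or in $\overline{U_R(y)}$ respectively, contradicting $K(y)\subset U_A(y)\cup U_R(y)$, $A(y)\subset U_A(y)$, and the disjointness of the closures of $U_A$ and $U_R$.

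Granted the splitting, the conclusion is as follows. Via Lemma~\ref{le:141210-1901} and excision one passes freely between $(N_1,N_2)$ and $(N_1,N_2^{-\eps})$, so Lemma~\ref{le:150114-1618} represents an arbitrary class in $\Hom_*\Con(y_0,R)\iso\Hom_*[N_1,N_2]$ by a class carried on a single fiber $\{a_n\}$ for some large $n$. The first inclusion makes $N_1(\{a_n\})\setminus N_3(\{a_n\})$ the topological disjoint union of its $U_A$- and $U_R$-pieces, so the cycle representative $z_n$ splits as $z_n^A+z_n^R$. The second inclusion forces $z_n^A\subset N_2(\{a_n\})$, making $z_n^A$ null modulo $N_2$, so $[z_n]=[z_n^R]$. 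The third inclusion then forces $\partial z_n^R\subset N_2(\{a_n\})\cap U_R\subset N_3(\{a_n\})$, whence $\partial[z_n]=[\partial z_n^R]=0$ in $\Hom_{*-1}[N_2,N_3]\iso\Hom_{*-1}\Con(y_0,A)$. Since every class is so represented, $\partial$ vanishes.

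The main obstacle is the fiberwise splitting claim: promoting the hypothesis $K(y)\subset U_A(y)\cup U_R(y)$ at the single limit $y\in\omega(y_0)$ to a uniform statement on the fibers $N_i(\{a_n\})$ of the (suitably shrunk) index pairs for large $n$. This is precisely where strong admissibility of the isolating neighbourhoods together with the index-pair axioms IP2--IP5 for both $(N_1,N_3)$ and $(N_2,N_3)$ must be invoked; the remaining steps are routine manipulations with long exact sequences, excision and the direct-limit isomorphism.
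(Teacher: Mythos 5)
Your overall strategy — pass to the direct-limit description of Lemma~\ref{le:150114-1618}, use non-uniform connectedness to obtain a fiberwise splitting of the index triple into a $U_A$-part and a $U_R$-part, and conclude that the connecting homomorphism annihilates every class — is the same as the paper's. The paper's own proof reaches the conclusion slightly indirectly (it shows that the inclusion-induced map $\Hom_*[N_2^{-\eps},N_3^{-\eps}]\to\Hom_*[N_1,N_3^{-\eps}]$ is a monomorphism and then invokes exactness), but that is a cosmetic difference; your cycle-by-cycle version would prove the same thing.

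There is, however, a genuine gap in the fiberwise splitting claim as you state it. Your first inclusion $N_1(\{a_n\})\setminus N_3(\{a_n\})\subset U_A\cup U_R$ is too weak to support the decomposition you rely on. To split a relative cycle carried in $N_1(\{a_n\})$ into a $U_A$-piece and a $U_R$-piece — or, in the abstract version used by the paper, to obtain the direct sum decomposition $\Hom_*(M_1,M_3)\iso\Hom_*(M_1\cap\hat U_A,M_3\cap\hat U_A)\oplus\Hom_*(M_1\cap\hat U_R,M_3\cap\hat U_R)$ via additivity — one needs $N_1(\{a_n\})$ itself, not merely $N_1(\{a_n\})\setminus N_3(\{a_n\})$, to lie in the disjoint open union $U_A\cup U_R$. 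A singular simplex whose image meets $N_3\setminus(U_A\cup U_R)$ cannot be assigned to either piece, so the chain does not split. The hypothesis (IP4) only controls $N_1\setminus N_3$, so you cannot get the stronger inclusion from the index-pair axioms and strong admissibility alone; you must actively shrink $N_1$. This is precisely the role of Lemma~\ref{le:170918-1736} in the paper, which produces an index triple $(L_1,L_2,L_3)$ with $L_1\subset\hat N'_\eps\cap N_1$ for the small isolating neighbourhood $N'_\eps$ of Lemma~\ref{le:150112-1631}, so that $L_1(r^{-1}(U))\subset r^{-1}(U_A)\cup r^{-1}(U_R)$ outright. Without this step your splitting argument breaks down.

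A secondary point: arguing with cycle representatives $z_n=z_n^A+z_n^R$ presupposes a chain-level model (e.g.\ singular homology), whereas the paper works with an arbitrary homology theory with compact supports. The robust formulation of the splitting is the direct sum decomposition above together with the projection $p$ onto the $U_A$-summand and a commutative diagram (the paper's final diagram), which uses only functoriality and additivity. Your inclusions (b') $N_1\cap U_A\subset N_2$ and (c') $N_2\cap U_R\subset N_3$ are then exactly what is needed to show that $p$ kills the $U_R$-part and that the $U_A$-part factors through the attractor index, and this combination of (a) (strengthened as above), (b') and (c') is equivalent to what the paper obtains in~\eqref{eq:170919-1647} and~\eqref{eq:170919-1442}. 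So the idea is right; the execution needs the strengthened first inclusion and the additivity-based formulation to be complete.
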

	
	\begin{lemma}
		\label{le:150109-1812}
		Let $(y_0, K, A, R)$ be an attractor-repeller decomposition such 
		that $A$ and $R$ are not uniformly connected. Suppose that $K$ is compact.\footnote{This follows from the
			assumptions of Theorem \ref{th:141205-1538}}
		
		Then there are $y'\in \omega(y_0)$, an open neighbourhood $U_A\subset \Hull^+(y_0)\times X$ of $A$,
		an open neighbourhood $U_R\subset \Hull^+(y_0)\times X$ of $R$ and a neighbourhood $U$ of $y'$ in $\Hull^+(y_0)$ such that $U_A\cap U_R = \emptyset$ and
		$K(y) \subset U_A(y)\cup U_R(y)$ for all $y\in U$.
	\end{lemma}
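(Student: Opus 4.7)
The plan is to use compactness of $K$ to upgrade the pointwise condition provided by the definition of "not uniformly connected" into a uniform condition on a neighbourhood of the chosen base point.

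First, I would invoke the hypothesis that $A$ and $R$ are not uniformly connected to obtain some $y'\in\omega(y_0)$, disjoint open neighbourhoods $U_A,U_R\subset\Hull^+(y_0)\times X$ of $A$ and $R$, such that $K(y')\subset U_A(y')\cup U_R(y')$. I will keep these $y'$, $U_A$, $U_R$ as the ones promised by the lemma; the only thing left to produce is the neighbourhood $U$ of $y'$ in $\Hull^+(y_0)$.

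Next, consider the "bad" set $B:=K\setminus(U_A\cup U_R)$. Since $U_A\cup U_R$ is open in $\Hull^+(y_0)\times X$, $B$ is closed in $K$, and since $K$ is compact, $B$ is compact. Let $\pi_Y\colon\Hull^+(y_0)\times X\to\Hull^+(y_0)$ denote the canonical projection. Then $\pi_Y(B)$ is a compact, hence closed, subset of the (Hausdorff) metric space $\Hull^+(y_0)$. The pointwise condition at $y'$ says exactly that the fibre $B(y')=K(y')\setminus\bigl(U_A(y')\cup U_R(y')\bigr)$ is empty, so $y'\notin\pi_Y(B)$.

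Therefore $U:=\Hull^+(y_0)\setminus\pi_Y(B)$ is an open neighbourhood of $y'$ in $\Hull^+(y_0)$, and for every $y\in U$ the fibre $B(y)$ is empty, which is precisely the statement $K(y)\subset U_A(y)\cup U_R(y)$. There is no real obstacle here — the only delicate point is that everything works inside $\Hull^+(y_0)\times X$ (not $Y\times X$), so one has to be careful that "open" and "closed" refer consistently to the subspace topology; the compactness of $K$ carries across this distinction without issue.
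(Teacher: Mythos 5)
Your proof is correct and uses the same essential ingredients as the paper's (compactness of $K$ together with openness of $U_A\cup U_R$), but you package it as a direct construction rather than the paper's proof by contradiction. The paper assumes no such $U$ exists, extracts a sequence $(y_n,x_n)\in K$ with $y_n\to y'$ and $x_n\notin U_A\cup U_R$, and uses compactness of $K$ to pass to a convergent subsequence landing in $K\setminus(U_A\cup U_R)$ over $y'$, contradicting the pointwise condition. You instead observe that $B:=K\setminus(U_A\cup U_R)$ is compact, project it to $\Hull^+(y_0)$, note the image is closed and omits $y'$, and take $U$ to be the open complement; this exhibits $U$ explicitly rather than proving its existence indirectly. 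Both arguments are sound and of comparable length; yours is marginally cleaner in that it avoids the subsequence bookkeeping and makes visible that the conclusion holds for the entire open complement of $\pi_Y(B)$, not just some small ball around $y'$.
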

	
	\begin{proof}
		Since $A$ and $R$ are not uniformly connected, there must exist a $y'\in\omega(y_0)$, an
		open neighbourhood $U_A\subset\Hull^+(y_0)\times X$ of $A$ and an open neighbourhood
		$U_R\subset \Hull^+(y_0)\times X$ of $R$ such that
		$U_A\cap U_R=\emptyset$ and $K(y')\subset U_A(y')\cup U_R(y')$.
		
		Suppose that the lemma does not hold. Then
		there is a sequence $(y_n, x_n)\in K$ such that $y_n\to y'$ and
		$x_n\in K(y_n)\setminus (U_A\cup U_R)$.
		Due to the compactness of $K$, we may assume without loss of generality
		that $(y_n,x_n)\to (y',x_0)\in K$. Thus, $(y', x_0)\in K\setminus (U_A\cup U_R)$,
		which is a contradiction.
	\end{proof}
	
	Let $y'\in \omega(y_0)$, $U\subset \Hull^+(y_0)$ a closed neighbourhood of $y'$, $U_A\subset \Hull^+(y_0)\times X$ an open neighbourhood
	of $A$ and $U_R\subset \Hull^+(y_0)\times X$ an open neighbourhood of $R$
	for which the conclusions of Lemma \ref{le:150109-1812} hold. 
	There is a sequence $t_n\to\infty$ in $\IR^+$
	such that $a_n := y^{t_n}_0\in U$ for all $n\in\IN$. By the choice of
	$U$, one has $U_A(a_n)\cap U_R(a_n) = \emptyset$ and $K(a_n)\subset U_A(a_n)\cup U_R(a_n)$
	for all $n\in\IN$. 
	
	\begin{lemma}
		\label{le:150112-1631}
		Let $N'_\eps := \cl_{\Hull^+(y_0)\times X}\bigcup_{(y,x)\in K} B_\eps(y,x)$.
		
		There is a real $\eps_0>0$ such that for all $\eps<\eps_0$, 
		$N'_\eps \subset\Hull^+(y_0)\times X$ is an isolating neighbourhood for $K$ such
		that $N'_\eps(U) \subset U_A(U)\cup U_R(U)$.
	\end{lemma}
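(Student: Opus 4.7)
The plan is to prove the two assertions of the lemma separately---that $N'_\eps$ is an isolating neighbourhood of $K$, and that $N'_\eps(U) \subset U_A(U) \cup U_R(U)$---each by a compactness argument, and then take $\eps_0$ to be the minimum of the two thresholds produced.

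For the isolating neighbourhood property, I would start from a strongly admissible isolating neighbourhood $N$ of $K$ in $\Hull^+(y_0) \times X$, as furnished by the hypotheses of Theorem \ref{th:141205-1538}. Since $K$ is compact and contained in $\interior_{\Hull^+(y_0) \times X} N$, a routine distance-to-complement argument in the metric space $\Hull^+(y_0) \times X$ yields some $\eps_1 > 0$ such that $N'_{\eps_1} \subset N$. For every $\eps \le \eps_1$, the set $N'_\eps$ is closed by construction, contains the open $\eps$-ball around every point of $K$ (hence $K \subset \interior N'_\eps$), and any invariant subset of $N'_\eps$ is an invariant subset of $N$ and therefore contained in $K$. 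This establishes $N'_\eps$ as an isolating neighbourhood of $K$.

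For the fibrewise containment I would argue by contradiction. Assume that no $\eps_0$ works, so for each $n \in \IN$ there exists $(y_n, x_n) \in N'_{1/n}$ with $y_n \in U$ but $(y_n, x_n) \notin U_A \cup U_R$. Since $N'_{1/n}$ is the closure of the open $1/n$-neighbourhood of $K$, I may pick $(y'_n, x'_n) \in K$ with $d((y_n,x_n),(y'_n,x'_n)) \to 0$. Compactness of $K$ yields a subsequence along which $(y'_n, x'_n) \to (y^*, x^*) \in K$, and hence $(y_n, x_n) \to (y^*, x^*)$ as well. Closedness of $U$ forces $y^* \in U$, so by the choice of $U_A, U_R, U$ preceding the lemma (via Lemma \ref{le:150109-1812}), one has $(y^*, x^*) \in K(y^*) \subset U_A(y^*) \cup U_R(y^*) \subset U_A \cup U_R$. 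But $U_A \cup U_R$ is open, so $(y_n, x_n) \in U_A \cup U_R$ for all sufficiently large $n$, contradicting the choice of $(y_n, x_n)$. This produces the required $\eps_2 > 0$, and setting $\eps_0 := \min(\eps_1, \eps_2)$ finishes the argument.

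I do not anticipate any substantive obstacles. The only technical subtlety is the first compactness step: confirming that the closed $\eps$-neighbourhood of a compact set sits inside any prescribed open neighbourhood of that set when $\eps$ is small. This is standard in a metric space but genuinely uses compactness of $K$, which in turn rests on the strong admissibility hypothesis inherited from Theorem \ref{th:141205-1538}. Everything else follows mechanically from the definitions of $N'_\eps$ and the already-verified fibrewise separation provided by Lemma \ref{le:150109-1812}.
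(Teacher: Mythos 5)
Your proposal is correct and follows essentially the same route as the paper's proof: both parts (the isolating-neighbourhood property from $K\subset\interior N$ with $N$ strongly admissible, and the fibrewise inclusion via a sequence in $K(U)\setminus(U_A\cup U_R)$ converging to a point that would contradict Lemma \ref{le:150109-1812}) are precisely the compactness-plus-closedness-of-$U$ argument the paper invokes, only spelled out in more detail.
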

	
	\begin{proof}
		It is sufficient to prove that for all $\eps>0$ sufficiently small, $N'_\eps(U)$ is an isolating neighbourhood
		for $K$ in $\Hull^+(y_0)\times X$ and $N'_\eps(U)\subset U_A\cup U_R$.
		$K$ is (by assumption) an isolated invariant set admitting a strongly admissible isolating neighbourhood,
		so for small $\eps>0$, $N'_\eps$ is an isolating neighbourhood for $K$.
		
		Suppose that $N'_\eps(U)\subset U_A\cup U_R$ does not hold for small $\eps>0$.
		Using the compactness of $K$ and the closedness assumption on $U$, one concludes that
		there is a point $(y,x)\in K(U)\setminus (U_A\cup U_R) = \emptyset$, which is a contradiction.
	\end{proof}
	
	Fix an isolating neighbourhood $N'\subset\Hull^+(y_0)\times X$ for $K$ for which
	the conclusions of Lemma \ref{le:150112-1631} hold. 
	Recall that $r:\;\IR^+\times X\to \Hull^+(y_0)\times X$ is defined by $r(t,x) := (y^t_0,x)$.
	By using Lemma 4.3 in \cite{article_naci_1}, one obtains an index triple $(N_1, N_2, N_3)$
	for $(y_0, K, A, R)$ with $N_1\subset r^{-1}(N')$.

	\begin{lemma}
		\label{le:170918-1737}
		Letting $\hat N'_\eps:= r^{-1}(N'_\eps)$, there exist
		reals $\eps>0$, $T>0$ such that
		\begin{equation}
			\label{eq:170811-1822}
			\hat N'_\eps\setminus N^{-T}_2(N_1) \subset r^{-1}(U_R).
		\end{equation}
	\end{lemma}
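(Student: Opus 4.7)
The plan is to argue by contradiction: assume the conclusion fails, so for each $n\in\IN$ one can pick $(t_n,x_n)\in\hat N'_{1/n}\setminus N_2^{-n}(N_1)$ with $q_n:=r(t_n,x_n)\notin U_R$. Since $\dist(q_n,K)\leq 1/n$ by definition of $N'_{1/n}$, compactness of $K$ yields a subsequence with $q_n\to q^*\in K$.

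First I would control the forward orbits over $[0,n]$. Property (IP5) for the index pair $(N_1,N_3)$ of $(y_0,K)$ provides a neighbourhood $W$ of $K$ in $\Hull^+(y_0)\times X$ with $r^{-1}(W)\subset N_1$, so $(t_n,x_n)\in N_1$ for large $n$. If $(t_n,x_n)\chi_{y_0} s_0\notin N_1$ for some $s_0\leq n$, then (IP2) for $(N_1,N_3)$ would produce $s\leq s_0$ with $(t_n,x_n)\chi_{y_0} s\in N_3\subset N_2$, contradicting $(t_n,x_n)\notin N_2^{-n}(N_1)$. Hence $(t_n,x_n)\chi_{y_0}[0,n]\subset N_1\setminus N_2\subset r^{-1}(N')$, which pushes forward under $r$ to $q_n\pi s\in N'$ for $s\in[0,n]$. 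Next, (IP5) for the pair $(N_2,N_3)$ supplies an open neighbourhood $W_A$ of $A$ with $r^{-1}(W_A)\subset N_2$; since the orbit avoids $N_2$, we obtain $q_n\pi s\notin W_A$ for $s\in[0,n]$.

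Now I would take limits along the subsequence. For any fixed $s\geq 0$, strong admissibility of $N'$ together with continuity of the skew-product semiflow $\pi$ at $(q^*,s)$ (well defined because $q^*\in K$ and $K$ is invariant) gives $q_n\pi s\to q^*\pi s$, and closedness of the complement of $W_A$ yields $q^*\pi s\notin W_A$ for every $s\geq 0$. Similarly, $U_R$ being open together with $q_n\notin U_R$ implies $q^*\notin U_R$, hence $q^*\notin R$.

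Finally I would apply the attractor-repeller trichotomy to the full solution $u\colon\IR\to K$ with $u(0)=q^*$ that invariance of $K$ provides. The alternative $u(\IR)\subset A$ forces $q^*\in A\subset W_A$; the alternative $u(\IR)\subset R$ forces $q^*\in R\subset U_R$; and the alternative $\omega(u)\subset A$, with $A$ compact and $W_A$ an open neighbourhood of $A$, forces $u(s)=q^*\pi s\in W_A$ for all sufficiently large $s$. Each outcome contradicts the deductions of the previous paragraph. The delicate step is establishing the convergence $q_n\pi s\to q^*\pi s$, which relies on strong admissibility of the isolating neighbourhood $N'$ and continuity of the skew-product semiflow on $\Hull^+(y_0)\times X$; the rest is bookkeeping with the index triple axioms and the attractor-repeller trichotomy.
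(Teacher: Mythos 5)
Your proof is correct and follows the same contradiction strategy as the paper: assume the inclusion fails for every $\eps$, $T$, extract a sequence $q_n = r(t_n,x_n)\notin U_R$ with $\dist(q_n,K)\to 0$, pass to a convergent subsequence $q_n\to q^*\in K\setminus U_R$, and derive a contradiction from the fact that the forward orbit of $(t_n,x_n)$ over $[0,n]$ stays in $N_1\setminus N_2$. The difference is in how the contradiction at $q^*$ is reached. The paper invokes (without elaboration) an isolating neighbourhood $N_R$ for $R$ with $N_1\setminus N_2\subset r^{-1}(N_R)$, so that $q^*\pi t\in N_R$ for all $t\geq 0$ forces $q^*\in R$, contradicting $q^*\notin U_R$ in one stroke. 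You instead take the (IP5) neighbourhood $W_A$ for the index pair $(N_2,N_3)$ of $(y_0,A)$, observe that $r^{-1}(W_A)\subset N_2$ implies $q^*\pi s\notin W_A$ for all $s\geq 0$, and then rule out all three branches of the attractor--repeller trichotomy for the full solution in $K$ through $q^*$. Your route is more self-contained, since it only uses the index-triple axioms and the trichotomy rather than an auxiliary fact about $(N_1,N_2)$ and $R$, at the cost of being a bit longer. One small remark: the appeal to strong admissibility of $N'$ is not needed for the step $q_n\pi s\to q^*\pi s$; since $q_n\to q^*$ is already in hand and $q^*\pi s$ is defined (because $q^*\in K$), continuity of the skew-product semiflow suffices. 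Strong admissibility would matter only if one still had to extract a convergent subsequence, which you have already done via compactness of $K$.
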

	
	\begin{proof}
	Suppose to the contrary that there exists sequences $(\eps_n)_n$ in $\IR^+$ and
	$(t_n,x_n)$ in 
	$\cl_{\IR^+\times X} \left(\hat N'_{\eps_n}\setminus N^{-n}_2(N_1)\right)$ with $\eps_n\to 0$
	and $r(t_n,x_n)\not\in U_R$ for all $n\in\IN$.
	As $\eps_n\to 0$, there is a subsequence $(t_{n(k)}, x_{n(k)})_k$ such that $r(t_{n(k)},x_{n(k)})\to (y,x)\in K\setminus U_R$. 
	
	Let $N_R$ be an isolating neighbourhood for $R$ in $\Hull^+(y_0)\times X$ with $N_1\setminus N_2\subset r^{-1}(N_R)$. We have $(y,x)\pi t\in N_R$ for all $t\in\IR^+$, so $(y,x)\in R\setminus U_R = \emptyset$.
	\end{proof}
	
	\begin{lemma}
		\label{le:170918-1736}
		For arbitrary $\eps>0$,
		there is an index triple $(L_1, L_2, L_3)$ for $(y_0, K, A, R)$
		such that $(L_1, L_2, L_3)\subset (\hat N'_\eps\cap N_1, N_2, N^{-T}_3)$ for some $T=T(\eps)\in\IR^+$.
	\end{lemma}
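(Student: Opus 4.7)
The plan is to combine an application of Lemma~4.3 of \cite{article_naci_1}, invoked for a shrunken isolating neighbourhood, with an admissibility/compactness argument in the spirit of Lemma~\ref{le:170918-1737}.

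First, using Lemma~\ref{le:150112-1631} with a parameter $\eps'<\eps$ in place of $\eps$, I pick a closed isolating neighbourhood $\tilde N \subset \Hull^+(y_0)\times X$ of $K$ with $\tilde N \subset N'_\eps \cap N'$. Since $\tilde N$ is a closed subset of the strongly admissible set $N'$, it is itself strongly admissible. I then invoke Lemma~4.3 of \cite{article_naci_1} applied to $(y_0,K,A,R)$ and the isolating neighbourhood $\tilde N$, producing an index triple $(L_1,L_2,L_3)$ for $(y_0,K,A,R)$ with $L_1\subset r^{-1}(\tilde N)\subset \hat N'_\eps$. The desired inclusions $L_1\subset N_1$ and $L_2\subset N_2$ are then enforced, if necessary, by passing to $L_i\cap N_i$; the index pair axioms (IP1)--(IP5) are preserved for the intersected triple because $\tilde N\subset \interior_{\Hull^+(y_0)\times X} N'$ and $(N_1,N_2,N_3)$ was itself an index triple.

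Second, the inclusion $L_3\subset N^{-T}_3$ for some $T>0$ is obtained by contradiction. If no finite $T$ worked, I could pick a sequence $(t_n,x_n)\in L_3$ with $(t_n,x_n)\chi_{y_0}[0,n]\cap N_3=\emptyset$. Since $L_3\subset L_2\subset N_2$ and $(N_2,N_3)$ is an index pair for $(y_0,A)$, axiom (IP3) together with $L_1\subset N_1$ forces $(t_n,x_n)\chi_{y_0}[0,n]\subset N_2\subset r^{-1}(N')$. Strong admissibility of $N'$ then yields a subsequence of $r((t_n,x_n)\chi_{y_0}n)$ that converges to some $(y,\xi)\in \Hull^+(y_0)\times X$ giving rise to a full bounded solution contained in the largest invariant subset of $N'$, hence in $K$, and avoiding $N_3$ in positive time. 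By the index pair property of $(N_2,N_3)$ such a solution is forced to lie in the attractor $A$. On the other hand, the attractor-side construction of $L_3$ places the underlying points outside some fixed neighbourhood of $A$, so the limit point $(y,\xi)$ of $r(t_n,x_n)\in L_3$ is bounded away from $A$, which is a contradiction.

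The main obstacle I expect is the third step: one has to carefully reconcile the \emph{near-repeller} character of the exit set $L_3$ of the shrunken triple with the strong admissibility extraction inside $N'$, and relate the exit behaviour of the new triple to the original exit set $N_3$. No new ideas beyond compactness and the existing index-pair axioms are required, but the bookkeeping between the two layers $(L_1,L_3)$ and $(L_2,L_3)$ of the triple and their ambient counterparts $(N_1,N_3)$, $(N_2,N_3)$ demands attention; the resulting $T$ then depends only on $\eps$ and the data of the original triple, as claimed.
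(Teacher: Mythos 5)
Your proposal diverges from the paper's proof in a way that leaves two genuine gaps.

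The paper does not build a fresh index triple via Lemma~4.3 and then prune it. It applies Theorem~2.9 of \cite{article_naci_1} to the index pair $(N_1,N_3)$, producing an index pair $(L_1,L_3)$ for $(y_0,K)$ that is \emph{already} contained in $(\hat N'_\eps\cap N_1, N^{-T}_3)$ (after choosing $\eps$ small enough that $\hat N'_\eps\cap N_3 = \emptyset$). It then simply sets $L_2 := L_1\cap N_2$ and verifies (IP2)--(IP5) for $(L_2,L_3)$ by hand. The containments you need are thus built in by Theorem~2.9; no after-the-fact intersection and no compactness extraction are required.

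The first gap in your argument is the step where $L_1,L_2$ are replaced by $L_i\cap N_i$ and you assert that the index-pair axioms survive ``because $\tilde N\subset\interior N'$ and $(N_1,N_2,N_3)$ was an index triple.'' Index-pair axioms are not stable under intersection of two unrelated index pairs: (IP2) for $(L_1\cap N_1, L_3)$ would require that whenever the orbit leaves $L_1\cap N_1$ via $N_1$ rather than via $L_1$, it still passes through $L_3$ --- and this does not follow from $(N_1,N_3)$ being an index pair, since that only gives passage through $N_3$, not $L_3$. Nothing in the cited containments repairs this; it is exactly the problem Theorem~2.9 is invoked to avoid.

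The second gap is in the compactness argument for $L_3\subset N^{-T}_3$. Strong admissibility applies to the \emph{end-points} $r\bigl((t_n,x_n)\chi_{y_0}n\bigr)$ (or to suitable time-shifts), not to the initial points $r(t_n,x_n)$, which need not have any convergent subsequence in an infinite-dimensional setting. Your final contradiction, however, uses convergence of the initial points $r(t_n,x_n)\in r(L_3)$ to conclude the limit lies away from $A$, while the full-solution-in-$A$ conclusion pertains to the limit of the flowed points. These are two different (putative) limits, and the argument conflates them. Also, the axiom you invoke to keep the orbit in $N_2$ is (IP2), not (IP3). None of these issues arise in the paper's proof because Theorem~2.9 yields $L_3\subset N^{-T}_3$ directly, so no admissibility extraction is needed at all.
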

	
	\begin{proof}
		By choosing $\eps$ smaller if required we can assume without loss of generality
		that $\hat N'_\eps$ and $N_3$ are disjoint. By virtue of Theorem 2.9 in \cite{article_naci_1}, 
		there is an index pair $(L_1, L_3)$ for $(y_0, K)$ with $(L_1, L_3)\subset (\hat N'_\eps\cap N_1, N^{-T}_3)$
		for real $T$ sufficiently large. Setting $L_2 := L_1\cap N_2$, it remains to prove
		that $(L_2, L_3)$ is an index pair for $(y_0, A)$.
		
		\begin{enumerate}
			\item[(IP2)] Let $x\in L_2$ and $x\chi t\not\in L_2$ for some $t>0$. We must have $x\chi t\not\in N_2$ or
			$x\chi t\not\in L_1$. Thus either $x\chi s\in L_3$ for some $s\in\left[0,t\right]$ or $x\chi\left[0,t\right]\subset L_1$ and $x\chi s\not\in N_1$ for some $s\in\left[0,t\right]$.
			The second case leads immediately to a contradiction since $L_1\subset N_1$.
			\item[(IP3)] Let $x\in L_2$ and $x\chi t\not\in L_2$ for some $t>0$. Either $x\chi t\not\in N_2$,
			so $x\chi s\in (\IR^+\times X)\setminus N_2$ for
			some $s\in\left[0,t\right]$, or $x\chi t\not\in L_1$. In the second case,
			it follows that $x\chi s\in (\IR^+\times X)\setminus L_2$ for some $s\in\left[0,t\right]$.
			\item[(IP4)] There is an isolating neighbourhood $M^1$ (resp. $M^2$) for $K$ (resp. $A$)
			with $\cl_{\IR^+\times X} (L_1\setminus L_3) \subset r^{-1}(M^1)$ (resp. $\cl_{\IR^+\times X} (N_2\setminus N_3) \subset r^{-1}(M^2)$). Another isolating neighbourhood
			for $A$ is  $M^3 := M^1\cap M^2$. One has $\cl_{\IR^+\times X} (L_2\setminus L_3) \subset \cl_{\IR^+\times X} (L_1\setminus L_3)
			\subset r^{-1}(M^1)$ and, using the fact that $L_2$ and $N_3$ are disjoint, $\cl_{\IR^+\times X} (L_2\setminus L_3) \subset \cl_{\IR^+\times X} (N_2\setminus N_3)
			\subset r^{-1}(M^2)$. Combining the inclusions yields
			$\cl_{\IR^+\times X} (L_2\setminus L_3) \subset r^{-1}(M^1) \cap r^{-1}(M^2) = r^{-1}(M^1\cap M^2)$.
			\item[(IP5)] There is a neighbourhood $U^1$ (resp. $U^2$) of $K$ (resp. $A$) such that
			$r^{-1}(U^1)\subset L_1\setminus L_3$ (resp. $r^{-1}(U^2)\subset N_2\setminus N_3\subset N_2$). 
			$U^3:=U^1\cap U^2$ is a neighbourhood of $A$ and $r^{-1}(U^3)\subset (L_1\setminus L_3)\cap N_2$.
		\end{enumerate}
	\end{proof}
	
	By redefining $(N_1, N_2, N_3)$ and using Lemma \ref{le:170918-1736} together with Lemma \ref{le:150112-1631},
	Lemma \ref{le:170918-1737}
	and Lemma 4.5 in \cite{article_naci_1}, one can assume that
	$(N_1, N_2, N_3)$ is an index triple for $(y_0, K, A, R)$ such that
	\begin{align}
		N_1(r^{-1}(U)) &\subset r^{-1}(U_A) \cap r^{-1}(U_R) \label{eq:170919-1647}\\
		N_1\setminus N_2 &\subset r^{-1}(U_R) \notag 
	\end{align}
	and thus also
	\begin{equation}
		\label{eq:170919-1442}
		N_1\cap r^{-1}(U_A) \subset N_2
	\end{equation}

	\begin{lemma}
		\label{le:150112-1725}
		Let $(N_1, N_2, N_3)$ be an index triple for $(y_0, K, A, R)$. Then
		there is a set $N'_3\supset N_3$ such that
		$(N_1, N'_3)$ is a regular index pair for $(y_0,K)$, $N'_3\subset N^{-\tau}_3(N_1)$ for some $\tau\geq 0$ and
		$(N^{-\tau}_2(N_1), N'_3)$ is a regular index pair for $(y_0,A)$.
	\end{lemma}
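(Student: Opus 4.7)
My plan is to take $N'_3$ to be the set produced by applying Lemma \ref{le:140107-1916} to the index pair $(N_1, N_3)$ for $(y_0, K)$. This yields a constant $\tau \geq 0$ and a set $N'_3$ with $N_3 \subset N'_3 \subset N^{-\tau}_3(N_1)$ such that $(N_1, N'_3)$ is a regular index pair for $(y_0, K)$, so the first two requirements of the lemma come for free. Since $N_3 \subset N_2$ implies $N^{-\tau}_3(N_1) \subset N^{-\tau}_2(N_1)$, one also has $N'_3 \subset N^{-\tau}_2(N_1)$, and it remains to prove that $(N^{-\tau}_2(N_1), N'_3)$ is a regular index pair for $(y_0, A)$.

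A preliminary observation I intend to exploit is that the pair $(N_1, N_2)$ itself satisfies the basic index-pair axioms (IP1)--(IP3): (IP2) is inherited from $(N_1, N_3)$ via $N_3 \subset N_2$, while (IP3) follows by combining (IP2) of $(N_2, N_3)$ with (IP3) of $(N_1, N_3)$. From (IP3) of $(N_1, N_2)$ one extracts a trapping property: any orbit that enters $N_2$ remains in $N_2$ for as long as it stays inside $N_1$. Together with the definition of $N^{-\tau}_2(N_1)$, this shows that an orbit issuing from a point of $N^{-\tau}_2(N_1)$ stays in $N^{-\tau}_2(N_1)$ precisely as long as it stays in $N_1$. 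With this in hand, axioms (IP1)--(IP3) for $(N^{-\tau}_2(N_1), N'_3)$ reduce to the corresponding axioms for $(N_1, N'_3)$ together with $N_3 \subset N'_3$, and (IP4)--(IP5) follow by intersecting the isolating neighbourhoods and neighbourhoods of $K$ provided by $(N_1, N'_3)$ with those for $A$ already present in the triple $(N_1, N_2, N_3)$.

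The place I expect to require the most care is regularity of the new attractor-side pair, and this is precisely the reason for putting $N^{-\tau}_2(N_1)$ rather than $N_2$ on the left-hand entry. The trapping property handles it: for $x \in N^{-\tau}_2(N_1) \setminus N'_3$, the orbit through $x$ can leave $N^{-\tau}_2(N_1) \setminus N'_3$ only by entering $N'_3$ or by leaving $N_1$, because exits from $N^{-\tau}_2(N_1)$ coincide with exits from $N_1$. Consequently the inner exit time associated with $(N^{-\tau}_2(N_1), N'_3)$ agrees on $N^{-\tau}_2(N_1) \setminus N'_3$ with the inner exit time of the already-regular pair $(N_1, N'_3)$, and continuity is inherited directly from Lemma \ref{le:140107-1916}, completing the plan.
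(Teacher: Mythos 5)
Your overall plan matches the paper's: apply Lemma~\ref{le:140107-1916} to $(N_1,N_3)$ to get $N'_3$ and $\tau$, verify that $(N^{-\tau}_2(N_1),N'_3)$ is an index pair, and then deduce regularity from the fact that the exit time of this pair is the restriction of the exit time of $(N_1,N'_3)$. The trapping observation (exits from $N^{-\tau}_2(N_1)$ coincide with exits from $N_1$) is correct and is essentially what the paper uses to verify (IP2) and to identify the two exit times.

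The genuine gap is in your treatment of (IP4). You propose to intersect an isolating neighbourhood $M_1$ for $K$ coming from $(N_1,N'_3)$ with an isolating neighbourhood $M_2$ for $A$ coming from $(N_2,N_3)$. That intersection does work for (IP5): $r^{-1}(W_1\cap W_2)\subset (N_1\setminus N'_3)\cap(N_2\setminus N_3)\subset N^{-\tau}_2(N_1)\setminus N'_3$. But for (IP4) you need $N^{-\tau}_2(N_1)\setminus N'_3\subset r^{-1}(M_1\cap M_2)$, and the second inclusion fails: $N^{-\tau}_2(N_1)$ contains points of $N_1\setminus N_2$ that merely flow into $N_2$ within time $\tau$, and these need not lie in $r^{-1}(M_2)$, which is only guaranteed to contain $N_2\setminus N_3$. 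An isolating neighbourhood of $A$ covering the enlarged set has to be built with more care (for instance, saturating $M_2$ by the flow up to time $\tau$ and checking that this is still an isolating neighbourhood of $A$). The paper sidesteps this by citing Lemma 4.5 and the sandwich lemma of \cite{article_naci_1}, which establish precisely that $(N^{-\tau}_2(N_1), N_3)$, $(N^{-\tau}_2(N_1), N^{-\tau}_3(N_1))$, and hence $(N^{-\tau}_2(N_1), N'_3)$ are index pairs for $(y_0,A)$. You should either invoke those results as the paper does, or supply the additional argument showing that $N^{-\tau}_2(N_1)\setminus N'_3$ maps into an isolating neighbourhood of $A$.
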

	
	\begin{proof}
		It follows from Lemma \ref{le:140107-1916} that there exist a set $N'_3\supset N_3$ and a constant $\tau\in\IR^+$
		such that $(N_1, N'_3)$ is a regular index pair for $(y_0, K)$ and $N_3\subset N'_3\subset N^{-\tau}_3(N_1)$. This means in particular that the 
		(inner) exit time $T:\; N_1\to \left[0,\infty\right]$, $T(x):=\sup\{t\in\IR^+:\; x\chi_{y_0}\left[0,t\right]\subset N_1\setminus N'_3\}$
		is continuous. 
		
		One needs to prove that $(N^{-\tau}_2(N_1), N'_3)$ is an index pair.
		\begin{enumerate}
			\item[(IP2)] Let $x\in N^{-\tau}_2(N_1)$ and $x\chi_{y_0} t\not\in N^{-\tau}_2(N_1)$
			for some $t\geq 0$. One cannot have $x\chi_{y_0} \left[0,t\right]\subset N_1$,
			so $x\chi_{y_0} s\in N'_3$
			for some $s\leq t$.
			\item[(IP3)] Let $x\in N'_3$ and $x\chi_{y_0} t\not\in N'_3$. It follows
			that $x\chi_{y_0} s\in (\IR^+\times X)\setminus N_1\subset (\IR^+\times X)\setminus N^{-\tau}_2(N_1)$ for some $s\in\left]0,t\right]$
			because $(N_1, N'_3)$ is an index pair.
		\end{enumerate}
		
		By Lemma 4.5 in \cite{article_naci_1}, $(N^{-\tau}_2(N_1), N_3)$ and $(N^{-\tau}_2(N_1), N^{-\tau}_3(N_1))$ are
		index pairs for $(y_0, A)$. By using the sandwich lemma \cite[Lemma 2.8]{article_naci_1}, one concludes that $(N^{-\tau}_2(N_1), N'_3)$ is an index pair for $(y_0, A)$.
		
		Finally, the exit time with respect to the index pair $(N^{-\tau}_2(N_1), N'_3)$ is the restriction
		of $T$ to $N^{-\tau}_2(N_1)$ and therefore continuous that is, the index pair is regular.
	\end{proof}
	
	Having proved Lemma \ref{le:150112-1725}, we can assume without loss of generality
	that $(N_1, N_2, N_3)$ is an index triple, satisfies \eqref{eq:170919-1647}, \eqref{eq:170919-1442}, 
	and $(N_1, N_3)$ as well as $(N_2, N_3)$ are regular index pairs\footnote{One could say that $(N_1, N_2, N_3)$
		is a regular index triple.}.
	
	By using \cite[Theorem 2.9]{article_naci_1}, we can further assume that there exists
	an index pair $(L_1, L_2)$ for $(y_0, A)$ such that $L_1\subset r^{-1}(U_A)$.

	We are now in a position to complete the proof of Theorem \ref{th:141205-1538}. 
	In view of the hypotheses of Lemma \ref{le:150114-1618}, one chooses a small real $\eps>0$,
	which is unrelated to the previous uses of this letter.
	
	Let $\iota:\;\Hom_*[L_1, L_2]\to \Hom_*[N^{-\eps}_2, N^{-\eps}_3]$ be inclusion induced.
	It is clear that $\iota$ is an isomorphism. Moreover, the following long exact sequence
	\begin{equation*}
	\xymatrix@1@C=6mm@R=15mm{
		\ar[r] &
 		\Hom_*[N_1,N^{-\eps}_3] \ar[r] &
		\Hom_*[N_1,N^{-\eps}_2] \ar[r]^-{\delta} \ar[rd]^-{\iota^{-1}\circ\delta}&
		\Hom_{*-1}[N^{-\eps}_2,N^{-\eps}_3] \ar[r]^-{i} &
		\Hom_{*-1}[N_1,N^{-\eps}_3] \ar[r] & \\
		&&&
		\Hom_{*-1}[L_1, L_2] \ar[u]^-\iota
		&&		
	}
	\end{equation*}
	associated with the index triple $(N_1, N^{-\eps}_2, N^{-\eps}_3)$ gives rise to the connecting homomorphism. 
	The above sequence is exact, so in order to prove $\delta=0$
	it is sufficient to prove that $i\circ \iota$ and thus also $i$ is a monomorphism, where $i:\; \Hom_*[N^{-\eps}_2,N^{-\eps}_3] \to \Hom_*[N_1,N^{-\eps}_3]$
	is inclusion induced.

	Let $(a_n)_n$ be a sequence of reals such that $a_n\to\infty$ and $y^{a_n}_0\in U$ for all $n\in\IN$.
	By using Lemma \ref{le:150114-1618}, a commutative diagram
	\begin{equation*}
	\xymatrix{
		\Hom_*[L_1, L_2] \ar[r]^-{i\circ \iota}  & \Hom_*[N_1, N^{-\eps}_3] \\
		\dirlim(B^\eps_k, g_{k,l}) \ar[r]^-{j} \ar[u]^g & \dirlim(\tilde B^\eps_k, \tilde g_{k,l}) \ar[u]^{\tilde g} 
	}
	\end{equation*}	
	is obtained. Recall that by definition $B^\eps_k := \Hom_*(L_1(\{a_k\}), L_3(\{a_k\})$
	and $\tilde B^\eps_k := \Hom_*(N_1(\{a_k\}), N^{-\eps}_3(\{a_k\}))$. Let $j_k:\; B^\eps_k \to \tilde B^\eps_k$
	be inclusion induced and set $j(\rep{k,x}) := \rep{k,j_kk(x)}$ for $x\in B^\eps_k$. We omit the proof that $j$ is well-defined.
	
	By Lemma \ref{le:150114-1618}, $g$ and $\tilde g$ are isomorphisms. Thus it is sufficient to prove
	that $j$ is a monomorphism. Suppose $j(\rep{k_0,x}) = \rep{k_0,j_{k_0}(x}) = \rep{k_0,y} = 0$ for $x\in B^\eps_{k_0}$, $y\in \tilde B^\eps_{k_0}$ and $k_0\in N$. There is an $l\in\IN$, $l\geq k_0$ such that $\tilde g_{k_0,l}(y) = 0$. 
	Furthermore, $j_l \circ g_{k_0,l}(x) = \tilde g_{k_0,l}(y) = 0$. We can hence assume without loss of generality
	that $j_{k_0}(x) = 0$.
	
	For brevity, a couple of notational shortcuts are introduced: $M_1 := N_1(\{a_{k_0}\})$, $M_2 := N^{-\eps}_2(\{a_{k_0}\})$,
	$M_3 := N^{-\eps}_3(\{a_{k_0}\})$, $\hat U_A := r^{-1}(U_A)$ and $\hat U_R := r^{-1}(U_R)$.
	It follows from \eqref{eq:170919-1647} and the choice of the sequence $a_k$ that
	$\Hom_*(M_1\cap \hat U_A, M_3\cap \hat U_A) \oplus \Hom_*(M_1\cap \hat U_R, M_3\cap \hat U_R)
	\iso \Hom_*(M_1, M_3)$. Let the projection $p:\; \Hom_*(M_1, M_3)\to \Hom_*(M_1\cap\hat U_A, M_3\cap\hat U_A)$
	be defined by the above direct sum decomposition.
	
	We obtain once again a commutative diagram with inclusion induced homomorphisms and the projection $p$ 
	introduced above.
	\begin{equation*}
	  \xymatrix{
	  	\Hom_*(L_1(\{a_{k_0}\}), L_2(\{a_{k_0}\})) \ar[r]^-{h_1} \ar@/^8mm/[rr]^-{j_{k_0}}&
	  	\Hom_*(M_2, M_3) \ar[r]^-{h_2} & 
	  	\Hom_*(M_1, M_3) \ar[d]^-p \\
	  	&&
	  	\Hom_*(M_1\cap\hat U_A, M_3\cap\hat U_A) \ar[lu]^-{h_3}
	  }
	\end{equation*}
	It follows that $h_1(x) = h_3\circ p \circ h_2\circ h_1(x) = 0$. Moreover,
	$h_1(x) = \iota(g([k,x]))$, implying
	that $[k,x]=0$ in $\dirlim(B^\eps_k, g_{k,l})$ since $\iota$ and $g$ are isomorphisms.
	The proof of Theorem \ref{th:141205-1538} is complete.
\end{section}

\providecommand{\bysame}{\leavevmode\hbox to3em{\hrulefill}\thinspace}
\providecommand{\MR}{\relax\ifhmode\unskip\space\fi MR }
\providecommand{\MRhref}[2]{%
	\href{http://www.ams.org/mathscinet-getitem?mr=#1}{#2}
}
\providecommand{\href}[2]{#2}

\end{document}